\newtheorem{theorem}{Theorem}[section]
\newtheorem{lemma}[theorem]{Lemma}
\newtheorem{proposition}[theorem]{Proposition}
\newtheorem{definition}[theorem]{Definition}
\newtheorem{example}{Example}
 \newtheorem{rem}[theorem]{Remark}
\newcommand{\clco}{{\rm cl}\,{\rm co}\,}
\newcommand{\epir}[1]{{\rm epi}_{#1}\,}
\newcommand{\tgcl}{\tilde{g}^{\rm cl}}
\newcommand{\dom}{{\rm dom}\,}
\newcommand{\epi}{{\rm epi}\,}
\def\real{\RRR}
\def\integer{\ZZZ}
\DeclareSymbolFont{lettersA}{U}{txmia}{m}{it}
 \DeclareMathSymbol{\FFF}{\mathord}{lettersA}{'206}
 \DeclareMathSymbol{\NNN}{\mathord}{lettersA}{'216} 
 \DeclareMathSymbol{\RRR}{\mathord}{lettersA}{'222}%
 \DeclareMathSymbol{\ZZZ}{\mathord}{lettersA}{'232} 
 \DeclareMathSymbol{\QQQ}{\mathord}{lettersA}{'221} 
 \DeclareMathSymbol{\CCC}{\mathord}{lettersA}{'203} 
\title{
A new approach for solving mixed integer DC programs using a continuous relaxation with no integrality gap and smoothing techniques
}
\date{}
\author{Takayuki Okuno \and Yoshiko T. Ikebe%
\thanks{Department of Information and Computer Technology
Tokyo University of Science 6-3-1 Niijuku, Katsushika-ku, Tokyo 125-8585, Japan
(\{t\_okuno,yoshiko\}@rs.tus.ac.jp )}
}
\begin{document}
\maketitle
\begin{abstract}
In this paper, we consider a class of 
mixed integer programming problems (MIPs) whose objective functions are DC functions,
 that is, functions representable in terms of the difference of two convex functions.
These MIPs contain a very wide class of computationally difficult nonconvex MIPs since the DC functions have powerful expressability.
Recently, Maehara, Marumo, and Murota provided a continuous reformulation 
without integrality gaps for discrete DC programs having only integral variables. 
They also presented a new algorithm to solve the reformulated problem.  
Our aim is to extend their results to MIPs and give two specific algorithms 
to solve them.
First, we propose an algorithm based on DCA originally proposed by Pham Dinh and Le Thi, where convex MIPs are solved iteratively. 
Next, to handle nonsmooth functions efficiently, 
we incorporate a smoothing technique into the first method.  
We show that sequences generated by the two methods converge to stationary points under some mild assumptions. 
\end{abstract}
{\bf \it Key words:} mixed integer DC program, integrality gap, closed convex extension, smoothing method
\section{Introduction}\label{sec1}
Let us consider the following optimization problem:
\begin{equation}\notag 
\min\;  f(x)\ {\rm sub. to }\; x=(x_M, x_N)\in S,\ x_M \in \real^M, \;x_N \in \integer^N.
\end{equation}
Here $f: \real^n \to \real \cup\{\infty\}$ is a closed proper function, i.e.,  
$f$ is lower semicontinuous and its effective domain ${\rm dom}\,f:=\{x\in \mathbb{R}^n\mid f(x)<\infty\}$
is not empty. Moreover,
$S \subseteq \real^n$ is a nonempty closed convex set, and $M$ and $N$ are disjoint sets of indices such that
$M \cup N = \{1,2,\ldots,n\}$, 
and $x_M = (x_i)_{i \in M},\, x_N=(x_i)_{i \in N}$.

In the case where $f$ is a linear or convex quadratic function 
and $S$ is represented with only linear or convex quadratic inequalities,
the branch-and-bound method and cutting plane techniques work nicely 
in the practical sense. 
Indeed, there are many commercial and free solvers implementing them, e.g. CPLEX \cite{cplex}, 
gurobi \cite{gurobi} and SCIP \cite{Achterberg2009}.
On the other hand, for the general nonlinear case, the above problem 
is extremely difficult to solve.
There are a number of ways to approach such 
mixed-integer nonlinear problems.
One method is to extend the framework of branch-and-bound to the continuous spaces
\cite{Floudas1999, Sherali2001, Tawarmalani2004}.
Another is to  utilize sequential quadratic programming (SQP)
\cite{Fletcher1994, Leyffer2001, Exler2007, Exler2012}.
These algorithms incorporate such techniques as 
trust regions, outer approximations and branch-and-bound techniques to
solve quadratic problems approximating the original one.
In particular, in applying these SQP-type algorithms 
to mixed-integer convex problems with continuously differentiable convex functions, 
global convergence to an optimum can be proved.
There are also algorithms which deal solely with mixed-integer nonlinear programs 
{with convex $f$} 
\cite{Geoffrion1972, Gupta1985, Duran1986, Westerlund1995, Bonami2009}.
See, for example, the surveys \cite{Bonami2012} and \cite{Burer2012}.

In this paper, we consider the case where 
$f$ is a so-called {\it DC} function, that is, 
a function representable as the difference of two convex functions:
\begin{equation}\label{prob1}
\begin{array}{ll}
\min\; & f(x)=g(x)-h(x)\\
{\rm sub. to }\; &x\in S,\ x_M \in \real^M, \;x_N \in \integer^N.
\end{array}
\end{equation}
where  
$g:\mathbb{R}^n\to \mathbb{R}\cup\{+\infty\}$   
and $h:\mathbb{R}^n\to \mathbb{R}{\cup \{+\infty\}}$ are closed proper convex functions.
{Hereafter, we suppose $\infty-\infty = \infty$ for convention, whereby $\emptyset\neq {\rm dom}\,g\subseteq {\rm dom}\,h$ and ${\rm dom}\,g={\rm dom}\,f$ naturally hold.}

The class of DC functions covers a very wide range of functions.
For example, any twice continuously differentiable function is DC, 
moreover, functions generated by applying operators such as $\sum$, $\Pi$, $|\cdot|$, 
and $\max(\cdot,\cdot)$ to DC functions also belong to the class DC 
\cite{Horst1999,horst2000introduction}.
Hence, the problem of our focus, \eqref{prob1} 
covers a wide class of mixed integer programs.
Note however, that given a DC function $f$, finding two explicit convex functions
$g$ and $h$ representing $f$ is a hard open problem. 
Among the functions for which a DC representation is easily found, perhaps
the most common are the quadratic functions.
{In this paper, we assume that one DC representation is explicitly given; 
how we obtain it will not enter our discussion.}

The DC programming in continuous variables is an important field of 
research in continuous optimization,
and theoretical and practical aspects have been extensively studied
\cite{PhamDinh1997, PhamDinh2014}. 
For example, the global optimality condition is completely characterized by the Toland-Singer duality theorem.
This duality theorem in turn forms the basis for the fundamental DC programming
algorithm known as DCA\cite{PhamDinh1997}, which is known to have nice convergence properties.

DC programming also has many useful applications.  
One example is in mixed-integer linear programs,
where integer constraints on variables are incorporated into the objective
functions via penalty functions \cite{Niu1997}. 
Other notable results have been reported in sparse optimization 
\cite{Thiao2010, LeThi2015} 
and portfolio selection \cite{LeThi2009}. 
This is an active field, with remarkable recent progress in both theory
and applications.  

On the other hand, discrete DC programming, which concerns DC programs with 
integrally constrained variables, that is, \eqref{prob1} with {$M = \emptyset$ and}
$N=\{1,2,\ldots,n\}$, is still a relatively unexplored area.
Recently, a promising approach was proposed by Maehara and Murota \cite{Maehara2015a}, 
who showed how the framework of discrete convex analysis can be applied,
to export results in continuous DC theory to a discrete setting.
This was further pursued in Maehara, Marumo and Murota \cite{Maehara2015b}, 
who proved a powerful result in constructing continuous relaxations of 
discrete DC programs. 
The simplest continuous relaxation for \eqref{prob1} 
may just replace $\integer^N$ by $\real^N$.
As is well known, this does not work effectively in general, since 
an integrality gap usually occurs, that is, the optimal values 
of the original and relaxed problems do not coincide. 
On the other hand, the new continuous relaxation proposed in \cite{Maehara2015b} 
replaces $g$ with its closed convex closure (and $h$ with an arbitrary
relaxation).
Its notable property is that no integrality gap is generated.

In this paper we 
extend the theorem of Maehara, Marumo and Murota to mixed integer DC programs 
of the form\,\eqref{prob1},
and propose two algorithms to solve them.
Our first algorithm, a generic scheme  based on the DCA originally proposed by 
Pham Dinh and Le Thi\,\cite{PhamDinh1997}, 
iteratively solves a sequence of convex mixed integer programs.
Our second algorithm, designed to deal with nonsmooth functions,
is obtained by 
incorporating smoothing techniques\, \cite{chen2012smoothing} into the first.
The sequences generated by both two methods converge to stationary points under 
some mild assumptions. 

This paper is organized as follows.
In Section~\ref{sec3} we briefly describe existing results in 
continuous and discrete DC programming, and in Section~\ref{sec2}, 
we show how to extend the theorem of Maehara, Marumo and Murota to obtain
a continuous relaxation of \eqref{prob1} with no integrality gap.
Next, in Section~\ref{sec4} we describe our basic first algorithm,
and in Section~\ref{sec5} we give our second algorithm with smoothing techniques along with results in convergence properties.

{Throughout the paper, we will use the following notations:
For any $x\in \real^n$, $\|x\|$ represents the Euclidean 2-norm of $x$.
For any nonempty set $X\subseteq \real^n$, we denote the convex hull and closure of $X$ by
${\rm co}\,X$ and ${\rm cl}\,X$, respectively.
Also, we denote the interior and relative interior of $X$ by ${\rm int}\,X$ and ${\rm ri}\,X$, respectively.
Let $\varphi: \real^n \to \real \cup \{+\infty\}$ be a convex function. 
For $x \in \dom \varphi$, the subdifferential of $\varphi$ at $x$, that is, the set of all subgradients of $\varphi$ at $x$, is denoted by $\partial \varphi(x)$.
We write the conjugate of $\varphi$ as $\varphi^{\ast}$, that is, 
the function 
$\varphi^{\ast}: \real^n \to \real \cup \{+\infty\}$ defined by 
$\varphi^{\ast}(y) = \sup_{x \in \real^n}\big\{ \langle y,x\rangle - \varphi(x)\big\}$
where $\langle y,x\rangle$ stands for the canonical inner product of $y$ and $x$, i.e., $\langle y,x \rangle = y^{\top}x$. 
For $\psi: \integer^n (\mbox{resp.,}\ \real^n) \to \real \cup \{+\infty\}$ 
the epigraph of $\psi$ is the set 
$\epir{\integer^n}\psi\ ({\rm resp., }\ \epir{\real^n} \psi):=\{(x,\,x_{n+1}) \mid x_{n+1} \geq \psi(x),\,
x \in \integer^n (\mbox{resp.,}\ \real^n) \} \subseteq \real^{n+1}$.}
Finally, $\real^n_{+(++)}$ is the non-negative (positive) orthant in $\real^n$.
\section{A brief review of continuous and discrete DC programmings}\label{sec3}
We begin by considering \eqref{prob1} with $S=\real^n$ and $N=\emptyset$, more specifically, 
\begin{equation}\label{DCP}
\min_{x \in \real^n} \{g(x)-h(x)\}. 
\end{equation}
Then, the following proposition holds.
\begin{proposition}[\cite{PhamDinh1997}]
Suppose that the DC program (\ref{DCP}) has an optimal solution $x^{\ast}$. Then, we have
\begin{enumerate}
\item 
$\partial g(x^{\ast}) \supseteq \partial h(x^{\ast})$,
\item $\bar{y} \in \partial h(x^{\ast}) \Leftrightarrow 
x^{\ast} \in \partial h^{\ast}(\bar{y})$, and
\item $\bar{y} \in \partial h(x^{\ast}) \Rightarrow $
$\bar{y}$ is an optimal solution of 
$\inf_{y \in \real^n} \{h^{\ast}(y)-g^{\ast}(y)\}$. 
\end{enumerate}
\end{proposition}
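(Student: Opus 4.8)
The plan is to treat the three assertions in order, relying throughout on the Fenchel--Young relation and the biconjugate theorem: since $g$ and $h$ are closed proper convex, we have $g^{\ast\ast}=g$ and $h^{\ast\ast}=h$, which is what makes the subdifferential inversion and the duality in parts 2 and 3 work. Only part 1 actually uses that $x^{\ast}$ is optimal; parts 2 and 3 are then largely bookkeeping on top of it.

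For the first assertion I would argue straight from the definitions. Fix $\bar y\in\partial h(x^{\ast})$, so that $h(x)\ge h(x^{\ast})+\langle\bar y,x-x^{\ast}\rangle$ for every $x$. Because $x^{\ast}$ globally minimizes $g-h$, we also have $g(x)-h(x)\ge g(x^{\ast})-h(x^{\ast})$ for every $x$. Adding these and cancelling the $h(x)$ terms yields $g(x)\ge g(x^{\ast})+\langle\bar y,x-x^{\ast}\rangle$ for all $x$, i.e. $\bar y\in\partial g(x^{\ast})$; hence $\partial h(x^{\ast})\subseteq\partial g(x^{\ast})$. The step is routine, the only care being the convention $\infty-\infty=\infty$ together with $x^{\ast}\in\dom f=\dom g$, so the values manipulated are finite.

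The second assertion is a standard convex-analytic fact and does not invoke optimality. I would first record the equivalence that, for closed proper convex $h$, the membership $\bar y\in\partial h(x^{\ast})$ holds if and only if the Fenchel equality $h(x^{\ast})+h^{\ast}(\bar y)=\langle\bar y,x^{\ast}\rangle$ holds. Applying the same characterization to the conjugate $h^{\ast}$, whose own conjugate is $h^{\ast\ast}=h$ by closedness, shows that $x^{\ast}\in\partial h^{\ast}(\bar y)$ is equivalent to the very same equality. Chaining the two equivalences through this common equality gives the stated biconditional.

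For the third assertion I would combine the first two with Toland duality. By part 1 we also have $\bar y\in\partial g(x^{\ast})$, so the Fenchel equalities hold simultaneously for $g$ and $h$; subtracting them removes the $\langle\bar y,x^{\ast}\rangle$ term and gives $h^{\ast}(\bar y)-g^{\ast}(\bar y)=g(x^{\ast})-h(x^{\ast})$, that is, the dual objective at $\bar y$ equals the primal optimal value. It then remains to show this value is a lower bound for $h^{\ast}(y)-g^{\ast}(y)$ over all $y$, which I would get from the identity $\inf_x\{g(x)-h(x)\}=\inf_y\{h^{\ast}(y)-g^{\ast}(y)\}$, proved by substituting $h(x)=\sup_y\{\langle y,x\rangle-h^{\ast}(y)\}$ (valid since $h=h^{\ast\ast}$) and interchanging the two infima. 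The main obstacle sits exactly here: justifying the interchange of infimization and controlling the $\infty-\infty$ cases so that no spurious values enter; once the duality identity is secured, $\bar y$ attains the dual infimum and the claim follows.
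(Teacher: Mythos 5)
Your proof is correct. There is, however, nothing in the paper to compare it against: the paper states this proposition purely as a citation to Pham Dinh and Le Thi \cite{PhamDinh1997} and supplies no proof of its own, relying on it (and on Theorem~\ref{toland-singer}) as imported background. Your argument is the standard one from the DC-programming literature: part 1 by adding the subgradient inequality for $h$ at $x^{\ast}$ to the global optimality inequality for $g-h$ (with the finiteness of $g(x^{\ast})$ and $h(x^{\ast})$ guaranteed because $x^{\ast}\in\dom f=\dom g\subseteq \dom h$); part 2 via the Fenchel equality $h(x^{\ast})+h^{\ast}(\bar y)=\langle\bar y,x^{\ast}\rangle$ together with $h^{\ast\ast}=h$; and part 3 by subtracting the two Fenchel equalities for $g$ and $h$ at $(\,x^{\ast},\bar y)$ and invoking Toland--Singer duality. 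One small remark: the step you single out as the main obstacle, interchanging the two infimizations, is actually unconditional, since $\inf_x\inf_y=\inf_y\inf_x=\inf_{x,y}$ holds for any extended-real-valued function; the only genuine care is the $\infty-\infty$ bookkeeping, which is unproblematic here because $g$ and $h^{\ast}$ are proper (never $-\infty$), so the joint function $g(x)-\langle y,x\rangle+h^{\ast}(y)$ is well defined, and the paper's convention $\infty-\infty=\infty$ makes the dual objective consistent with the joint infimum.
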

The following theorem is known as Toland-Singer duality, and forms the basis
for DC minimization algorithms.
\begin{theorem}\label{toland-singer}(Toland-Singer duality)
\[
\inf_{x \in \real^n} \{g(x)-h(x)\} 
=
\inf_{y \in \real^n} \{h^{\ast}(y)-g^{\ast}(y)\} 
\]
\end{theorem}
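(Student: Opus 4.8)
The plan is to reduce the identity to the Fenchel--Moreau biconjugate theorem together with a harmless interchange of two infima. The only structural fact I need is that, since $h$ is a closed proper convex function, it coincides with its biconjugate, i.e.\ $h = h^{\ast\ast}$, which yields the representation $h(x) = \sup_{y \in \real^n}\{\langle x,y\rangle - h^{\ast}(y)\}$ for every $x \in \real^n$.

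First I would substitute this representation into the left-hand objective. Writing $g(x) - h(x) = g(x) - \sup_{y}\{\langle x,y\rangle - h^{\ast}(y)\} = \inf_{y}\{g(x) + h^{\ast}(y) - \langle x,y\rangle\}$, the left-hand side $\inf_{x}\{g(x)-h(x)\}$ becomes the double infimum $\inf_{x}\inf_{y}\{g(x)+h^{\ast}(y)-\langle x,y\rangle\}$.

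Second, since a double infimum is symmetric in the order of minimization, I would exchange the two infima and collect the terms depending on $x$, obtaining $\inf_{y}\big(h^{\ast}(y) + \inf_{x}\{g(x)-\langle x,y\rangle\}\big)$. The inner infimum is, by the very definition of the conjugate, $\inf_{x}\{g(x)-\langle x,y\rangle\} = -\sup_{x}\{\langle x,y\rangle - g(x)\} = -g^{\ast}(y)$, so the whole expression equals $\inf_{y}\{h^{\ast}(y)-g^{\ast}(y)\}$, which is exactly the right-hand side.

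The steps above are algebraically routine; the delicate point is to verify that all manipulations remain valid in the extended reals under the convention $\infty - \infty = \infty$. In particular, rewriting $g(x) - \sup_{y}\{\cdots\}$ as $\inf_{y}\{g(x)+\cdots\}$ must be checked at points $x \notin \dom g$ (where both sides should read $+\infty$), and the interchange of infima must be justified when some of the values involved are $\pm\infty$. Because $\emptyset \neq \dom g \subseteq \dom h$, the problematic combination never produces an ambiguous $\infty - \infty$ on the relevant domain, so I expect this bookkeeping, rather than any deep inequality, to be the main thing to get right.
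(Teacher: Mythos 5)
Your proof is correct, but note that the paper itself gives no proof of this theorem at all: it is stated as the classical Toland--Singer duality and implicitly cited from the DC-programming literature (Pham Dinh--Le Thi \cite{PhamDinh1997}), so there is nothing internal to compare against. What you have reconstructed is the standard textbook argument: Fenchel--Moreau biconjugation $h = h^{\ast\ast}$ (valid since $h$ is closed proper convex), substitution of $h(x) = \sup_y\{\langle x,y\rangle - h^{\ast}(y)\}$, interchange of the two infima, and identification of the inner infimum with $-g^{\ast}(y)$. Two small refinements to your bookkeeping discussion: first, the interchange of infima is never delicate --- for any extended-real-valued $F$ one has $\inf_x\inf_y F = \inf_{(x,y)} F = \inf_y\inf_x F$ unconditionally, so no justification is needed there; second, the genuine edge cases are (a) points $x \notin \dom g$, where both sides equal $+\infty$ because $h^{\ast}$ is proper (it never takes the value $-\infty$, since $h$ is proper) and hence every term $g(x)+h^{\ast}(y)-\langle x,y\rangle$ is $+\infty$, and (b) pulling $h^{\ast}(y)$ out of the inner infimum when $h^{\ast}(y)=+\infty$, where consistency is restored precisely by the paper's convention $\infty - \infty = \infty$ applied to $h^{\ast}(y)-g^{\ast}(y)$. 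With those two points made explicit, your argument is a complete and self-contained proof of the identity in the paper's setting.
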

We next define stationary points for 
DC programs that contain global optima.
\begin{definition}\label{def-stationary}
A stationary point for $g-h$ is a point $x^{\ast}$ such that 
\[
\partial g(x^{\ast}) \cap \partial h(x^{\ast}) \neq \emptyset.
\]
\end{definition}
Let us introduce an existing algorithm for solving the DC program which will become 
the base of our proposed algorithms, and cite its convergence results. 
For details we refer the reader to \cite{PhamDinh1997}.
\begin{center}
\underline{\sc{Simplified DC Algorithm(DCA)}}
\end{center}
\begin{description}
\item[Step~0:]
Choose $x^0 \in \real^n$. Set $k=0$
\item[Step~1:]
Choose $y^{k}\in \partial h(x^k)$ and $x^{k+1} \in \partial g^{\ast}(y^{k})$
\item[Step~2:] If stopping criterion is satisfied stop, 
\item[\phantom{\quad \quad}]
else set $k=k+1$ and go to Step 1
\end{description}
\begin{theorem}[\cite{PhamDinh1997}]
Let $\{x^k\}$ and $\{y^k\}$ be the sequences generated by the simplified DCA.
Then, the following statements hold.
\begin{enumerate}
\item $g(x^{k+1})-h(x^{k+1}) \leq g(x^{k})-h(x^{k})$. 
\item 
$h^{\ast}(y^{k+1})-g^{\ast}(y^{k+1}) \leq h^{\ast}(y^{k})-g^{\ast}(y^{k})$.
\item 
Every accumulation point $x^{\ast}$ ($y^{\ast}$) of the sequence
$\{x^k\}$ ($\{y^k\}$) is a stationary point of $g-h$ ($h^{\ast}-g^{\ast}$).
\end{enumerate}
\end{theorem}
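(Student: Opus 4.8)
The plan is to build everything on the Fenchel equality $y \in \partial \varphi(x) \iff \varphi(x) + \varphi^{\ast}(y) = \langle x, y\rangle$, valid for any closed proper convex $\varphi$, together with the conjugacy relation $x \in \partial g^{\ast}(y) \iff y \in \partial g(x)$. Writing $F(x) = g(x) - h(x)$ and $G(y) = h^{\ast}(y) - g^{\ast}(y)$, I would first establish the interleaving chain
\[
G(y^{k+1}) \;\leq\; F(x^{k+1}) \;\leq\; G(y^k) \;\leq\; F(x^k),
\]
from which parts 1 and 2 follow at once. To obtain $F(x^{k+1}) \leq G(y^k)$, I substitute the Fenchel equalities for $y^k \in \partial h(x^k)$ and for $y^k \in \partial g(x^{k+1})$ (the latter equivalent to $x^{k+1} \in \partial g^{\ast}(y^k)$) into $G(y^k)$, reducing the claim to the subgradient inequality for $h$ at $x^k$ tested against $x^{k+1}$. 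The inequality $G(y^k) \leq F(x^k)$ reduces symmetrically to the subgradient inequality for $g$ at $x^{k+1}$ tested against $x^k$, and $G(y^{k+1}) \leq F(x^{k+1})$ uses the subgradient inequality for $g^{\ast}$ at $y^k$ tested against $y^{k+1}$. These are all one-line convex-analysis estimates.

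For part 3, I would argue as follows. By part 1 the sequence $\{F(x^k)\}$ is nonincreasing; assuming it is bounded below, it converges, so the per-step gaps tend to zero. Splitting $F(x^k) - F(x^{k+1}) = [F(x^k) - G(y^k)] + [G(y^k) - F(x^{k+1})]$, each bracket is a nonnegative Bregman-type quantity, namely
\[
F(x^k) - G(y^k) = g(x^k) - g(x^{k+1}) - \langle y^k, x^k - x^{k+1}\rangle,
\]
and analogously for $h$; both therefore vanish in the limit. Now take a subsequence $x^{k_j} \to x^{\ast}$. Under the mild assumption that $\{y^k\}$ is bounded — which holds, for instance, when $h$ is finite and continuous near $x^{\ast}$, since subdifferentials are locally bounded there — I extract a further subsequence with $y^{k_j} \to y^{\ast}$. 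Because the graph of the subdifferential of a closed proper convex function is closed, $y^{k_j} \in \partial h(x^{k_j})$ together with $(x^{k_j}, y^{k_j}) \to (x^{\ast}, y^{\ast})$ yields $y^{\ast} \in \partial h(x^{\ast})$.

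It remains to show $y^{\ast} \in \partial g(x^{\ast})$, and this is where the main obstacle lies. I have $y^{k_j} \in \partial g(x^{k_j+1})$, so the closed-graph argument for $g$ would give $y^{\ast} \in \partial g(x^{\ast})$ provided $x^{k_j+1} \to x^{\ast}$ as well. Guaranteeing $\|x^{k+1} - x^k\| \to 0$ is the crux: the vanishing gap $g(x^k) - g(x^{k+1}) - \langle y^k, x^k - x^{k+1}\rangle \to 0$ forces $\|x^{k+1} - x^k\| \to 0$ once one of the DC components is strongly convex with modulus $\rho > 0$, so that the gap is bounded below by $\tfrac{\rho}{2}\|x^k - x^{k+1}\|^2$ — a regularization customarily built into the DCA analysis of \cite{PhamDinh1997}. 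With that in hand, $x^{k_j+1} \to x^{\ast}$, the closed graph of $\partial g$ gives $y^{\ast} \in \partial g(x^{\ast})$, and hence $y^{\ast} \in \partial g(x^{\ast}) \cap \partial h(x^{\ast})$, i.e. $x^{\ast}$ is stationary for $g - h$. The stationarity of every accumulation point $y^{\ast}$ of $\{y^k\}$ for $h^{\ast} - g^{\ast}$ then follows by applying the identical argument to the dual chain, exchanging the roles of $(g, x)$ and $(h^{\ast}, y)$ via the Toland--Singer duality of Theorem~\ref{toland-singer}.
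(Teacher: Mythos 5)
The paper itself gives no proof of this theorem: it is quoted verbatim from \cite{PhamDinh1997}, so your attempt has to be measured against the classical DCA argument. Your parts 1 and 2 are exactly that argument and are correct: the Fenchel equalities $h(x^k)+h^{\ast}(y^k)=\langle x^k,y^k\rangle$ (from $y^k\in\partial h(x^k)$) and $g(x^{k+1})+g^{\ast}(y^k)=\langle x^{k+1},y^k\rangle$ (from $x^{k+1}\in\partial g^{\ast}(y^k)$) reduce the interleaving chain $G(y^{k+1})\le F(x^{k+1})\le G(y^k)\le F(x^k)$ to one-line subgradient inequalities, and the chain gives both monotonicity statements at once.

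Part 3, however, contains a genuine gap: you obtain $y^{\ast}\in\partial g(x^{\ast})$ only by assuming that one DC component is strongly convex, so as to force $\|x^{k+1}-x^k\|\to 0$ and then apply graph-closedness of $\partial g$ at the shifted iterates $x^{k_j+1}$. Strong convexity is not a hypothesis of the theorem — the simplified DCA is stated for arbitrary closed proper convex $g,h$, and the paper itself treats strong convexity as an \emph{extra} assumption in Section~\ref{sec4}, introduced there by explicit regularization precisely because it cannot be taken for granted in \eqref{prob1}. The classical proof closes this step without it, and you already have the needed quantity in hand: by the same Fenchel equality for $g$ at $(x^{k_j+1},y^{k_j})$, the bracket you computed satisfies
\begin{equation*}
F(x^{k_j})-G(y^{k_j})
= g(x^{k_j})-g(x^{k_j+1})-\langle y^{k_j},x^{k_j}-x^{k_j+1}\rangle
= g(x^{k_j})+g^{\ast}(y^{k_j})-\langle x^{k_j},y^{k_j}\rangle ,
\end{equation*}
i.e.\ it is the Fenchel--Young gap of $g$ at $(x^{k_j},y^{k_j})$; the shifted iterate cancels out entirely. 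Since this gap tends to $0$, since $g$ and $g^{\ast}$ are lower semicontinuous, and since Fenchel--Young gives $g(x)+g^{\ast}(y)-\langle x,y\rangle\ge 0$ everywhere, passing to the limit along $(x^{k_j},y^{k_j})\to(x^{\ast},y^{\ast})$ yields $g(x^{\ast})+g^{\ast}(y^{\ast})=\langle x^{\ast},y^{\ast}\rangle$, that is $y^{\ast}\in\partial g(x^{\ast})$, with no strong convexity and no need for $x^{k_j+1}\to x^{\ast}$. Your remaining side conditions — finiteness of $\inf (g-h)$ and boundedness of $\{y^k\}$ along the subsequence — are genuinely needed and are part of the hypotheses of the theorem as stated in \cite{PhamDinh1997}; the version quoted in the paper merely suppresses them, so invoking them is fair, whereas importing strong convexity is not.
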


We now turn to DC programs with discrete variables.
Before introducing the results of Maehara, Marumo and Murota,
we define some concepts related to
discrete functions. 
Consider a function on discrete variables, 
$\varphi: \integer^n \to \real \cup \{+\infty\}$.
\begin{definition}\label{def-conv}
A convex function $\hat{\varphi}: \real^n \to \real \cup \{+\infty\}$ is a
\textit{convex extension} of $\varphi$ if 
\[\hat{\varphi}(x) = \varphi(x) \quad (x \in \integer^n). \]
The \textit{convex closure} of $\varphi$ is the function 
$\varphi^{\rm cl}: \real^n \to \real \cup \{+\infty\}$
whose epigraph is equal to the closed convex hull of the epigraph of $\varphi$
, i.e.,
$$
\epir{\real^n}\varphi^{\rm cl}=\clco\epir{\integer^n} \varphi.
$$
\end{definition}
While the convex closure can be defined for any $\varphi$, 
clearly, not all discrete functions have convex extensions.
If the discrete function $\varphi$ does have a convex extension 
$\hat{\varphi}$, then we always have
\[\varphi^{\rm cl}(x) = \hat{\varphi}(x) \quad (x \in \integer^n). \]
As, in this paper,  we will be concerned only with discrete functions which
are the restrictions of continuous convex functions on $\real^n$ to 
$\integer^n$, all discrete functions will trivially have 
convex extensions. 

Let us consider the DC program {\eqref{prob1} with $S=\real^n$
in which all variables are restricted to integer values,
i.e., $M = \emptyset$ and $N=\{1,2,\ldots,n\}$:
\begin{equation}\label{discreteDCP}
\min_{x \in \integer^n}\;  \big\{ g(x) - h(x) \big\}.
\end{equation}

If we define the discrete functions
 ${g}_{\small\integer}, \, h_{\integer}: \integer^n \to \real \cup \{+\infty\}$ 
as the restrictions of $g$ and $h$ to $\integer^n$: 
\begin{equation}\label{discreteDCPrelax}
{g}_{\small\integer}(x) = g(x), \; h_{\integer}(x) = h(x)  \quad (x \in \integer^n)
\end{equation}
and let $\hat{g}, \, \hat{h}: \real^n \to \real \cup \{+\infty\}$ 
be any convex extensions of ${g}_{\small\integer}$ and $h_{\integer}$,
then the following continuous DC program is clearly a relaxation of 
\eqref{discreteDCP}
\begin{equation}
\min_{x \in \real^n} \{\hat{g}(x)-\hat{h}(x)\}. \label{reDCP}
\end{equation}
The original functions $g$ and $h$ are obvious candidates for 
the convex extensions $\hat{g}$ and $\hat{h}$, but this is usually a 
poor choice  as the two optimal values 
of \eqref{discreteDCP} and \eqref{reDCP} generally do not coincide. 
Maehara, Marumo and Murota \cite{Maehara2015b} proved that the appropriate choice of  
$\hat{g}$ ensures this will not happen.
\begin{theorem}[\cite{Maehara2015b}]\label{th-mmm}
If $\hat{g}$ is the convex closure of ${g}_{\small\integer}$, then 
the optimal values of the two problems \eqref{discreteDCP} and 
\eqref{reDCP} coincide.
\end{theorem}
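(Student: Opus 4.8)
The plan is to prove the two inequalities between the optimal values separately. Write $m:=\inf_{x\in\integer^n}\{g(x)-h(x)\}$ for the value of \eqref{discreteDCP}, and recall that $\hat g=\tf^{\rm cl}$. The inequality $\inf_{x\in\real^n}\{\hat g(x)-\hat h(x)\}\le m$ is just the relaxation property: at any $z\in\integer^n$ the convex function $g$ is itself a convex extension of $\tf$, so $\hat g(z)=g(z)$ (as noted after Definition~\ref{def-conv}, the convex closure agrees with any convex extension on $\integer^n$), while $\hat h(z)=h(z)$ because $\hat h$ extends $h_{\integer}$. Thus the two objectives agree on $\integer^n$, and passing from the minimization over $\integer^n$ to the larger feasible set $\real^n$ cannot increase the optimal value.

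The substance of the theorem is the reverse inequality, which I would establish pointwise as $\hat g(x)-\hat h(x)\ge m$ for every $x\in\real^n$. The idea is to combine the definition of the convex closure with the convexity of $\hat h$. By Definition~\ref{def-conv}, $\epir{\real^n}\hat g=\clco\,\epir{\integer^n}\tf$, so (setting the closure operation aside for the moment) a point $x$ with $\hat g(x)<\infty$ can be written as a convex combination $x=\sum_i\lambda_i z_i$ of integer points $z_i\in\integer^n$, with $\lambda_i\ge 0$, $\sum_i\lambda_i=1$, and $\hat g(x)=\sum_i\lambda_i g(z_i)$. Jensen's inequality applied to the convex function $\hat h$ then gives $\hat h(x)\le\sum_i\lambda_i\hat h(z_i)=\sum_i\lambda_i h(z_i)$, the equality again because $\hat h$ agrees with $h$ on $\integer^n$. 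Subtracting the two estimates,
\[
\hat g(x)-\hat h(x)\ \ge\ \sum_i\lambda_i\bigl(g(z_i)-h(z_i)\bigr)\ \ge\ m,
\]
since every summand is at least $m$ and the weights sum to one.

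The main obstacle is exactly the closure I glossed over: $\epir{\real^n}\hat g$ is the \emph{closed} convex hull of $\epir{\integer^n}\tf$, so in general $(x,\hat g(x))$ is only a limit of finite convex combinations and the representation above holds merely approximately. I would remove this difficulty without any sequential argument by recasting the target inequality as a statement about minorants. Set $u:=\hat h+m$, a convex function. For every $z\in\integer^n$ one has $u(z)=h(z)+m\le g(z)=\tf(z)$, using $m\le g(z)-h(z)$; hence $\epir{\integer^n}\tf\subseteq\epi u$. Provided $\hat h$ (equivalently $u$) is closed --- which holds for the canonical choice $\hat h=h$, and can be assumed in general --- the set $\epi u$ is closed and convex, so it contains the closed convex hull: $\epir{\real^n}\hat g=\clco\,\epir{\integer^n}\tf\subseteq\epi u$. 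This epigraph inclusion is precisely $\hat g\ge u=\hat h+m$, i.e.\ $\hat g(x)-\hat h(x)\ge m$ for all $x$, which yields the reverse inequality and closes the proof. The only real subtlety, therefore, is the closedness (lower semicontinuity) of the chosen extension $\hat h$; once that is in hand, the two halves fit together to force equality of the optimal values.
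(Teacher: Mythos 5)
Your proposal is correct and takes essentially the same route as the paper: the paper does not prove Theorem~\ref{th-mmm} itself but proves its mixed-integer generalization (Theorem~\ref{thm:1023}), and there, exactly as in your argument, the nontrivial inequality is obtained from the fact that $\epir{\real^n}\hat{h}+(0,\,m)$ (the paper writes $(0,\,\tilde{v}^{\ast})$) is a closed convex set containing $\epir{\integer^n}\tf$ and must therefore contain $\clco\epir{\integer^n}\tf=\epir{\real^n}\hat{g}$ --- you invoke the minimality of the closed convex hull directly, while the paper packages the same fact as a contradiction via the intermediate set $D=\epir{\real^n}\tilde{g}^{\rm cl}\cap\left(\epir{\real^n}\hat{h}+(0,\,\tilde{v}^{\ast})\right)$ and a closed convex function $\phi$ with $\epi\phi=D$. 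The lower-semicontinuity caveat you flag for $\hat{h}$ is not a deviation from the paper either: the paper's proof implicitly needs it as well (its set $D$ is asserted to be closed, which requires $\epir{\real^n}\hat{h}$ to be closed), and it is harmless in the paper's actual use of the theorem, where $\hat{h}=h$ is closed by hypothesis.
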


We now turn to our main concern,  mixed integer DC programs.
\section{Continuous relaxation with no integrality gap}\label{sec2}

We begin by rephrasing problem \eqref{prob1}.
By using the indicator function of set $S$, that is, 
the function $\delta_S: \real^n \to \real \cup \{+\infty\}$ defined by
\[
\delta_S(x) = \left\{\begin{array}{ll}
0 & (x \in S) \\
+\infty & (x \not \in S)
\end{array}\right. ,
\]
\eqref{prob1} can be written as  
\begin{equation}\label{prob2-2}
\begin{array}{ll}
\min & \big(\delta_S(x) + g(x) \big) - h(x)\\
{\rm sub. to } & x=(x_M,x_N)\in \real^M\times \integer^N.
\end{array}
\end{equation}
Since $S$ is a closed convex set, $\delta_S$, and hence $\delta_S + g$ 
are closed proper convex functions.

Now define $\tilde{g}$ and $\tilde{h}$ respectively as the restrictions of 
$\delta_S + g$ and $h$ to $\real^M\times \integer^N$, that is,
\begin{equation*}
\tilde{g}:=\left(\delta_S + g\right)|_{\real^M\times \integer^N}\ \mbox{and }\tilde{h}:=h|_{\real^M\times \integer^N}.
\end{equation*}
We also denote the convex closure of $\tilde{g}$ by $\tilde{g}^{\rm cl}$.
Convex extensions, epigraphs, and convex closures of $\tilde{g}$ and 
$\tilde{h}$ are defined in a manner analogous to the discrete functions
in the last paragraph of Section~\ref{sec1} and Definition~\ref{def-conv};
for example, the epigraph of $\tilde{g}$ is defined as the set 
$
\{(x_M, x_N, x_{n+1}) \in \real^M \times \integer^N \times \real 
\mid x_{n+1} \geq g(x_M, x_N) \}=:\epir{\real^M\times \integer^N}\tilde{g}$.

In the rest of this section, we extend the theorem of Maehara, Marumo and Murota, to 
mixed-integer DC programs \eqref{prob1}, that is, DC programs  
involving both integer-valued and continuous variables. More precisely, we prove the following theorem.
\begin{theorem}\label{thm:1023}
Let $\hat{h}:\real^n\to \real\cup\{\infty\}$ be an arbitrary convex extension of $\tilde{h}$.
Then, the following (continuous) DC program:
\begin{equation}\label{prob30}
\min_{x \in \real^n} \big\{\tilde{g}^{\rm cl}(x) - \hat{h}(x)\big\}
\end{equation}
has the same optimal value as the mixed-integer DC program \eqref{prob2-2}, i.e., as \eqref{prob1}.
In particular, the optimal set of \eqref{prob2-2} is contained in 
that of \eqref{prob30}.
\end{theorem}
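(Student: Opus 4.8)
Write $v^{\ast}$ for the optimal value of \eqref{prob2-2} and $v_{\mathrm{cl}}$ for that of \eqref{prob30}. The plan is to establish the two inequalities $v_{\mathrm{cl}}\le v^{\ast}$ and $v_{\mathrm{cl}}\ge v^{\ast}$ separately, adapting the strategy behind Theorem~\ref{th-mmm} so as to accommodate the continuous block $x_M$. The structural observation that drives everything is that $\delta_S+g$ is itself a convex extension of $\tilde g$: its restriction to $\real^M\times\integer^N$ equals $\tilde g$ by definition, and it is convex on all of $\real^n$. Consequently, exactly as in the purely discrete remark following Definition~\ref{def-conv}, the convex closure reproduces the original data on the feasible lattice, and since $\hat h$ is a convex extension of $\tilde h$ we likewise have $\hat h=h$ there; that is,
\begin{equation}\label{eq:agree}
\tgcl(x)=\tilde g(x)=\delta_S(x)+g(x)\quad\text{and}\quad \hat h(x)=\tilde h(x)=h(x)\qquad (x\in\real^M\times\integer^N).
\end{equation}

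For the easy inequality, note that every feasible point $x\in\real^M\times\integer^N$ of \eqref{prob2-2} is admissible in \eqref{prob30}, and by \eqref{eq:agree} the two objectives agree at such $x$. Taking the infimum of the objective of \eqref{prob30} over this smaller set gives $v_{\mathrm{cl}}\le v^{\ast}$. The same identity proves the final assertion: if $x^{\ast}$ solves \eqref{prob2-2}, then the objective of \eqref{prob30} at $x^{\ast}$ equals $v^{\ast}=v_{\mathrm{cl}}$, so $x^{\ast}$ lies in the optimal set of \eqref{prob30}.

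The real content is the reverse inequality, that $\tgcl(x)-\hat h(x)\ge v^{\ast}$ for every $x\in\dom\tgcl$. Fix such an $x$. Since $\epir{\real^n}\tgcl=\clco\,\epir{\real^M\times\integer^N}\tilde g$, the point $(x,\tgcl(x))$ belongs to the closed convex hull of the discrete epigraph, so by Carath\'eodory's theorem it is a limit of finite convex combinations $(x^{(j)},t^{(j)})=\sum_i\lambda_i^{(j)}(x^{(i,j)},t^{(i,j)})$, with $\lambda_i^{(j)}\ge0$, $\sum_i\lambda_i^{(j)}=1$, $x^{(i,j)}\in\real^M\times\integer^N$ and $t^{(i,j)}\ge\tilde g(x^{(i,j)})$. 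Combining the convexity of $\hat h$ at $x^{(j)}=\sum_i\lambda_i^{(j)}x^{(i,j)}$, the epigraph bounds $t^{(i,j)}\ge\tilde g(x^{(i,j)})$, and \eqref{eq:agree} at the lattice points, we obtain
\[
t^{(j)}-\hat h(x^{(j)})\ \ge\ \sum_i\lambda_i^{(j)}\bigl(\tilde g(x^{(i,j)})-h(x^{(i,j)})\bigr)\ \ge\ v^{\ast},
\]
the last bound holding because every $x^{(i,j)}$ is feasible for \eqref{prob2-2} (it lies in $S$, epigraph points having finite height). The twist relative to the purely continuous setting is that the integral coordinates $x_N^{(i,j)}$ need not combine to an integer vector, so $x^{(j)}$ genuinely ranges over $\real^n$; this is exactly the mechanism that removes the integrality gap.

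The single genuine obstacle is the passage to the limit $j\to\infty$: because $\tgcl$ is built from the \emph{closure} of a convex hull, $(x,\tgcl(x))$ is in general only a limit of such combinations, and the bound $\hat h(x^{(j)})\le t^{(j)}-v^{\ast}$ must be preserved. As $t^{(j)}\to\tgcl(x)$, taking upper limits gives $\limsup_j\hat h(x^{(j)})\le\tgcl(x)-v^{\ast}$; invoking the lower semicontinuity of $\hat h$ then forces $\hat h(x)\le\tgcl(x)-v^{\ast}$, i.e.\ $\tgcl(x)-\hat h(x)\ge v^{\ast}$, whence $v_{\mathrm{cl}}\ge v^{\ast}$. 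I expect this limiting step to be where the care is needed, since it rests on $\hat h$ being closed (equivalently lower semicontinuous): under the literal hypothesis of an arbitrary convex extension one should either add closedness or check that replacing $\hat h$ by its closure alters neither the objective of \eqref{prob30} nor the equality sought. Everything else is the routine marriage of Carath\'eodory's theorem with Jensen's inequality.
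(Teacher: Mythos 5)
Your proof is correct (modulo the closedness caveat discussed below, which the paper shares) and it rests on the same underlying fact as the paper's argument: $\epir{\real^n}\tgcl=\clco\epir{\real^M\times\integer^N}\tilde{g}$ is the \emph{smallest} closed convex set containing $\epir{\real^M\times\integer^N}\tilde{g}$, while optimality of $v^{\ast}$ makes $\epir{\real^n}\hat{h}+(0,v^{\ast})$ a convex set containing that epigraph. The executions differ, though. The paper argues by contradiction at the level of sets: it intersects $\epir{\real^n}\tgcl$ with $\epir{\real^n}\hat{h}+(0,v^{\ast})$ and observes that a strict intersection would be the epigraph of a closed convex extension of $\tilde{g}$ strictly smaller than $\tgcl$, contradicting the definition of the convex closure. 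You instead unpack the containment pointwise: Carath\'eodory to write each point of the closed convex hull as a limit of finite convex combinations, Jensen plus the agreement $\hat{h}=h$ on $\real^M\times\integer^N$ to push the bound $\ge v^{\ast}$ through each combination, and lower semicontinuity of $\hat{h}$ to pass to the limit. Your route buys a cleaner accounting of where each hypothesis enters, and your preliminary identity $\tgcl=\tilde g=\delta_S+g$ on $\real^M\times\integer^N$ (which uses that $\delta_S+g$ is a \emph{closed} convex extension) makes the easy inequality and the optimal-set inclusion more transparent than the paper's one-line ``relaxation'' remark.

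The lower-semicontinuity caveat you raise at the end is not pedantry; it is a genuine gap in the statement, and the paper's own proof hides it in the unjustified assertion that $D=\epir{\real^n}\tgcl\cap\big(\epir{\real^n}\hat{h}+(0,\tilde{v}^{\ast})\big)$ is \emph{closed}, which holds only when $\hat{h}$ is lower semicontinuous. Without closedness the theorem is in fact false. Take $n=2$, $M=\emptyset$, $S=\real^2$, and $g=h=\delta_C$ with
\begin{equation*}
C=\{(x,y)\in\real^2 : x\ge 0,\ 0\le y\le \sqrt{2}\,x\},\qquad A:=C\cap\integer^2 .
\end{equation*}
Because the boundary ray $y=\sqrt{2}\,x$ contains no nonzero integer point, one checks that ${\rm co}\,A$ omits every point $(x,\sqrt{2}x)$ with $x>0$, while $\clco A=C$ and ${\rm co}\,A\cap\integer^2=A$. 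Then $\tgcl=\delta_C$, $\hat{h}:=\delta_{{\rm co}\,A}$ is a legitimate (non-closed) convex extension of $\tilde{h}$, and the relaxed objective at $(1,\sqrt{2})$ is $0-\infty=-\infty$ (the paper fixes only $\infty-\infty=\infty$; finite minus $\infty$ is $-\infty$), although the mixed-integer optimal value is $0$. So ``arbitrary convex extension'' must be read as ``arbitrary \emph{closed} convex extension''; with that reading both your proof and the paper's are complete. Note also that your proposed fallback---replacing $\hat{h}$ by its closure and checking that nothing changes---cannot work, as the example shows the optimal value genuinely changes; strengthening the hypothesis is the correct repair.
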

\begin{proof}
If \eqref{prob1} is not bounded from below, then neither is \eqref{prob30} since 
\eqref{prob30} is a relaxation of \eqref{prob1}.
Hence, we only consider the case where \eqref{prob1}, i.e, \eqref{prob2-2} 
has a finite optimal value. 
Let $\tilde{v^{\ast}}$ and $v^{\ast}$ respectively be the optimal values of problems
\eqref{prob2-2} and \eqref{prob30}. 
We will prove that 
$\tilde{v^{\ast}} = v^{\ast}$.
Since \eqref{prob30} is a relaxation of \eqref{prob1}, it suffices to show $\tilde{v^{\ast}} \le v^{\ast}$.
By optimality of $\tilde{v^{\ast}}$ for \eqref{prob2-2},
\[
\tilde{g}(x_M,\,x_N)  \geq 
\tilde{h}(x_M,\,x_N) + \tilde{v^{\ast}} \qquad 
(\;(x_M,x_N) \in \real^M \times \integer^N) 
\]
implying that 
\begin{equation}
\emptyset \neq \epir{\real^M\times \integer^N} \tilde{g} \subseteq \epir{\real^M\times \integer^N} \tilde{h} + (0,\,\tilde{v^{\ast}})
\subseteq \epir{\real^n} \hat{h} + (0,\,\tilde{v^{\ast}}), \label{eq:1220}
\end{equation}
where the leftmost 
inequality
is from the assumption that ${\rm dom}\,g\neq \emptyset$.
Notice that from \eqref{eq:1220} and $\epir{\real^M\times\integer^N}\tilde{g}\subseteq \epir{\real^n}\tilde{g}^{\rm cl}$, we obtain
$
D:=\epir{\real^n} \tilde{g}^{\rm cl} \cap \left(\epir{\real^n} \hat{h} + (0,\,\tilde{v^{\ast}})\right)\neq \emptyset.
$
Now, we show that  
\begin{equation}
\epir{\real^n} \tilde{g}^{\rm cl} \subseteq \big(\epir{\real^n} \hat{h} + (0,\,\tilde{v^{\ast}})\big).\label{eq:1220-2}
\end{equation}
To this end, suppose that 
$\epir{\real^n} \tilde{g}^{\rm cl} \setminus 
\big(\epir{\real^n} \hat{h} + (0,\,\tilde{v^{\ast}})\big) \neq \emptyset$ for contradiction.
Then $D$ is a nonempty closed convex subset such that 
\begin{equation}
\epir{\real^n} \tilde{g}^{\rm cl}\supsetneq D \supseteq \epir{\real^M\times \integer^N} \tilde{g}.\label{eq:1220-3}
\end{equation}
Moreover, there must be some closed proper convex function 
$\phi:\real^n \to \real \cup \{+\infty\}$ satisfying $\epir{\real^n} \phi = D$.
Obviously $\phi$ is a closed convex extension of $\tilde{g}$  
from $\real^M\times \integer^N$
to $\real^n$.
However, this contradicts the fact that $\tilde{g}^{\rm cl}$ is the convex closure of $\tilde{g}$,
because $\epi \phi$ is a proper subset of 
$\epi \tilde{g}^{\rm cl}$ by \eqref{eq:1220-3}.
Therefore, we have 
\eqref{eq:1220-2}
which further yields $\tilde{v}^{\ast}\le v^{\ast}$.   
This completes the proof.
\end{proof}

By the above result, it is justified to solve \eqref{prob30} instead of \eqref{prob1}.
In the remainder of the paper, 
we propose a specific algorithm for solving \eqref{prob30}.
In our algorithm, we choose $h$ as $\hat{h}$, a convex extension of $\tilde{h}$.
Therefore, our target is to solve the following problem 
\begin{equation}\label{prob3}
\min_{x \in \real^n} \big\{\tilde{g}^{\rm cl}(x) - h(x)\big\}.
\end{equation}

\section{A basic algorithm for the mixed integer DC program}\label{sec4}

In this section, we formulate a basic algorithm 
based on the DCA of Section~\ref{sec3}, for solving \eqref{prob3}.
For the DC program \eqref{prob3}, 
recall that the DCA involves finding $x_k$ and $y_k$ with 
\[
y^{k} \in \partial h(x^k),  \mbox{ and }
x^{k+1} \in \partial (\tgcl)^{\ast}(y^{k}).
\]
Finding $x^{k+1} \in \partial (\tgcl)^{\ast}(y^{k})$ 
can be accomplished by using the following relations:
\begin{eqnarray*}
 x^{k+1} \in \partial (\tgcl)^{\ast}(y^{k}) 
& \Leftrightarrow & \partial \tgcl(x^{k+1}) \ni y^{k} \\
& \Leftrightarrow 
& x^{k+1} \mbox{ is a solution of }
\inf_{w \in \real^n} \big(\tgcl(w)-\langle y^{k},\,w \rangle \big)
\end{eqnarray*}
The rightmost optimization problem involves minimizing a convex function.
However, this cannot be solved by using standard convex optimization 
methodologies such as the interior point method,
 since we do not have an explicit expression of $\tgcl$ in general.
This can be overcome by using Theorem~\ref{thm:1023} to note that it
corresponds to solving the following convex mixed integer program:
\begin{equation}
\begin{array}{ll}
\min & g(x) - \langle y^{k},\,x\rangle \\
{\rm sub. to } & x \in S,\ x=(x_M,x_N) \in \real^M\times \integer^N.
\end{array}\label{amuro}
\end{equation}
Hence, by replacing 
$x^{k+1}\in\partial (\tgcl)^{\ast}(y^{k})$ with \eqref{amuro} in Step~2 of the simplified DCA,
we gain a specific algorithm for solving \eqref{prob1} as below: 
\begin{center}
{\sc{\underline{Sequential convex mixed-integer}\\ \underline{programming method (SCMIP)}}}
\end{center}
\begin{description}
\item[Step~0:]
Choose $x^0 \in \real^n$. Set $k=0$.
\item[Step~1:]
Choose $y^{k} \in \partial h(x^k)$ and solve \eqref{amuro} to obtain $x^{k+1}$.
\item[Step~2:]
If stopping criterion is satisfied, stop, 
\item[\phantom{\quad \quad}]
else set $k=k+1$ and go to step 1
\end{description}
Obviously, each iteration point $x^k$ is feasible to \eqref{prob1}. 
By applying existing results on convergence for the DCA\,\cite{PhamDinh1997,PhamDinh2014}, 
we can make some observations for the case that at least one of  
$\tgcl$ and $h$ 
is a strongly convex function.
\begin{itemize}
\item 
both $\tgcl(x^{k})-h(x^{k})(=f(x^k))$ and $h^{\ast}(y^{k})-(\tgcl)^{\ast}(y^{k})$ 
strictly decrease
\item 
if $x^{\ast}$(resp., $y^{\ast}$) is an accumulation point of $\{x^{k}\}$ (resp., $\{y^k\}$),
then $x^{\ast}$(resp.,$y^{\ast}$) is a stationary point of $\min \tgcl(x)-h(x)$ (resp., $\min h^{\ast}(y)-(\tgcl)^{\ast}(y)$). That is to say, $y^{\ast}\in \partial\tgcl(x^{\ast})\cap \partial h(x^{\ast})$ and $x^{\ast}\in \partial(\tgcl)^{\ast}(y^{\ast})\cap \partial h^{\ast}(y^{\ast})$ hold.
\item 
the $x^N$-part of $x^k$ converges to some integer point within finitely many iterations.%
\footnote{This property is specific to the case where the DCA is applied to \eqref{prob3}. For the proof, see the convergence analysis of the next algorithm.}
\end{itemize}
In the above discussion, the assumption that at least one of 
$\tgcl$ and $h$ is strongly convex is crucial.
We place emphasis on the ``at least one'' phrase.

In problem \eqref{prob1}, we did not assume strong convexity of either $g$ or $h$.
Thus, at first glance, the above results may seem inapplicable, however, 
it can be easily overcome 
by considering the following equivalent problem for fixed $\rho>0$:
\begin{equation}
\min\ \big(g(x)+\rho\frac{\|x\|^2}{2}\big)
 - \big(h(x)+\rho\frac{\|x\|^2}{2}\big)\ \mbox{sub.to}\ x\in S,\ (x_M,x_N) \in \real^M\times \integer^N.\label{eq:1024_1}
\end{equation}
We note here that the convex closure of $g(\cdot)+\rho\|\cdot\|^2/2$ is usually not strongly convex,
whereas $h(\cdot) + \rho \|\cdot\|^2/2$ always is.
Thus it is important that we do not need the strong convexity of both 
$g$ and $h$.

Before ending 
this section, we make an important remark 
concerning the drawbacks of
transforming \eqref{prob1} to \eqref{eq:1024_1}.
Consider two different DC-decompositions $(g_1,h_1)$ and $(g_2,h_2)$ for $f$, i.e., $f=g_1-h_1=g_2-h_2$,
and corresponding continuous DC programs of the form\,\eqref{prob3}.
Their two optimal sets are exactly the same.
However, their sets of stationary points 
may possibly differ.
This phenomenon does not occur in continuous DC programs 
without discrete variables, and thus 
it is characteristic of \eqref{prob3}. 
To illustrate it, 
let us consider the following trivial mixed integer program: 
\begin{example}
\begin{equation}
{\rm min}\hspace{0.5em}x\hspace{0.5em}\mbox{\rm sub.~to }x\in \{-1,0,1\} \label{ex0}
\end{equation}
\end{example}
Choose two DC-decompositions $(g_1, h_1) = (x,0)$ and 
$(g_2, h_2) = (x^2+x,x^2)$. 
Then, $\tilde{g}_1^{\rm cl}(x) = x$, $\tilde{g}_2^{\rm cl}$
is the polygonal line connecting the three points $(-1,0), (0,0)$ and $(1,2)$, and $\dom \tilde{g}_1^{\rm cl} = \dom \tilde{g}_2^{\rm cl} = [-1,\,1]$.
Thus the resulting optimization problems of the form \eqref{prob3} are:
\[
\min \left\{\begin{array}{ll}
\infty \quad & (x<-1) \\ 
x & (-1 \leq x \leq 1) \\
\infty & (1 < x)
\end{array}\right.
\qquad \mbox{and} \qquad 
\min \left\{\begin{array}{ll}
\infty & (x<-1) \\ 
-x^2 & (-1 \leq x \leq 0) \\
2x-x^2 \quad & (0 \leq x \leq 1) \\
\infty & (1 < x)
\end{array}\right.
\]
%
%
The set of  stationary points of the former problem is nothing but 
the optimal set $\{-1\}$ of \eqref{ex0}, while that of the latter is $\{0,-1\}$.
This example indicates that
the choice of DC decomposition may affect efficiency in finding the optima.
\section{SCMIP with smoothing techniques}\label{sec5}
In this section, we focus on a particular case of \eqref{prob1} where either one of $g$ and $h$ is not differentiable on the effective domain.
Such problems often  occur in applications.
For example, consider problem\,\eqref{prob1} equipped with a dc inequality 
constraint $g_1(x)-g_2(x)\le 0$ with $g_1,g_2:\real^n\to \real$ being convex functions.
If $g_1-g_2$ is not convex, 
it is difficult to directly apply the SCMIP.  
One remedy for this is to lift the constraint $g_1(x)-g_2(x)\le 0$ into the objective function 
as a penalty term $\tau \max(g_1-g_2,0)$ with $\tau>0$ being a penalty parameter, 
then further decomposing it as follows:
\begin{equation}\notag
\begin{array}{ll}
\min\; & \left(g(x)+\tau\max\left(g_1(x),g_2(x)\right)\right)-\left(h(x)+\tau g_2(x)\right)\\
{\rm sub. to }\; &x\in S,\ x=(x_M, x_N) \in \real^M \times \integer^N.
\end{array}
\end{equation} 
This problem has the form \eqref{prob1},
 since $g+\tau\max(g_1,g_2)$ and $h+\tau g_2$ are both convex.
Obviously, $g+\tau\max(g_1,g_2)$ is not differentiable in general,
due to the existence of the max function. 
Theoretically, the SCMIP is applicable
regardless of the differentiability of $g$ and $h$.
However, in this case we must iteratively solve mixed-integer nonsmooth optimization
problems. 
Practically, nonsmooth problems are not as tractable as smooth ones,
even if they posses only continuous variables.
Moreover, most available free or commercial solvers cannot deal with them.
Thus, we  employ the smoothing method, which is one of the most powerful techniques 
for solving optimization problems or nonlinear equations involving 
nondifferentiable functions.
This method solves a sequence of approximated problems in which 
given nonsmooth functions are replaced by so-called smoothing functions. 
For a comprehensive survey on the smoothing method, refer to \cite{chen2012smoothing} and references therein.
In this paper, the smoothing functions are defined as below.
\begin{definition}\label{def:smoothing}
Let $\phi:\real^n\to \real\cup\{+\infty\}$ be 
a function such that ${{\rm int}}\,\dom\phi\neq\emptyset$ and
$\phi$ is continuous on $\dom \phi$.
We say that 
$\Phi:\real^n\times \real_+\to \real\cup\{+\infty\}$ is a smoothing function of $\phi$ when 
\begin{enumerate}
\item[$(1)$] $\dom\Phi(\cdot,\mu)=\dom \phi$ for any $\mu>0$,
\item[$(2)$] $\Phi(\cdot,\mu)$ is continuously differentiable on ${\rm int}\,\dom\Phi$ for any $\mu>0$, 
\item[$(3)$]$\displaystyle \lim_{z\in \dom\phi\to x,\,\mu\to +0}\Phi(z,\mu)=\phi(x)$ holds for any $x\in \dom\phi$, and 
\item[$(4)$]$\Phi(\cdot,0)=\phi(\cdot)$.
\end{enumerate}
\end{definition}
Various kinds of smoothing functions for specialized problems 
have been studied extensively. 
For example, consider the plus function $(\cdot)_+:=\max(\cdot,0)$. 
Many nonsmooth functions such as $|x|$, $\max(x,y)$, and $\min(x,y)$ 
can be explicitly represented with the plus function,
thus it is very versatile.
One way of approximating the plus function is by using a piecewise continuous function
$p:\real\to \real_+$ such that $\int_{-\infty}^{+\infty} p(s)ds=1$, 
$
p(s)=p(-s),\ \mbox{and }\int_{-\infty}^{+\infty}|s|p(s)ds<\infty.
$
For such $p$, it is well-known that 
$\Phi(t,\mu):=\int_{-\infty}^{+\infty}\{(t-\mu s)_+p(s)\}ds$ 
%
becomes a smoothing function for $(\cdot)_+$. 
This class of smoothing function is called the Chen-Mangasarian function\,\cite{chen1996class} and admits the following properties:
\begin{description}
\item[$(P1)$] For any $\mu\ge 0$, $\Phi(\cdot,\mu)$ is convex.
\item[$(P2)$] 
For any $x\in\dom\phi$, $\Phi(x,\cdot)$ is a nondecreasing function on $\real_+$ and furthermore there exists some $\kappa>0$ such that 
$0\le
\Phi(x,\mu_2)-\Phi(x,\mu_1) 
\le \kappa(\mu_2-\mu_1)$ holds for any $x\in \real^n$ and $0<\mu_1\le \mu_2$.
\\
\end{description}
Properties $(P1)$ and $(P2)$ often play a crucial role in establishing convergence analysis 
for the smoothing method.
They are satisfied by many existing smoothing functions for $(x)_+$, 
$\max(x,y)$, $\min(x,y)$ and $|x|$. 
In the rest of this section, we will establish
convergence properties under $(P1)$ and $(P2)$.

Now, let us turn back to \eqref{prob1} and let
$G,H:\real^n\times \real_+\to \real$ 
respectively
be smoothing functions for $g$ and $h$.
Hereafter, 
to simplify notations,
for any $u\in \real^2_+$ and $\{u^k\}_{k\ge 0}\subseteq \real^2_+$, we often write  
$$
u=(\mu_1,\mu_2),\hspace{1.0em}u^k=(\mu^k_1,\mu^k_2).
$$  
In addition, we denote 
$$G_{\mu}(\cdot)=G(\cdot,\mu),\hspace{1.0em}H_{\mu}(\cdot)=H(\cdot,\mu).$$ 
Following the terminology of Section~\ref{sec2},
let
$$
\tilde{G}_{\mu}=\left(G_{\mu}+\delta_S\right)|_{\real^M\times \integer^N},
$$
$\tilde{G}^{\rm cl}_{\mu}$ be 
the convex closure of $\tilde{G}_{\mu}$, and then
define $f_{u},\tilde{f}^{\rm cl}_{u}:\real^{n}\to \real\cup \{+\infty\}$ by 
$$
f_{u}:=G_{\mu_1}-H_{\mu_2},\hspace{1.0em}\tilde{f}^{\rm cl}_{u}:=\tilde{G}^{\rm cl}_{\mu_1}-H_{\mu_2},$$ respectively. 

We now
propose a new algorithm incorporating the smoothing method into the SCMIP presented in the previous section.
In our method, 
instead of solving \eqref{amuro},
we execute the SCMIP solving the following mixed integer convex 
optimization problem with a smoothing parameter $u^k\in \real^2_{+}$ such that $\lim_{k\to \infty} u^k=0$: 
\begin{equation}
 \begin{array}{rll}
     &\mbox{min}& 
G_{\mu^{k}_1}(x)- \langle y^k,x-x^k\rangle\\
     &\mbox{sub.to } &x\in S,\ x=
(x_M,x_N) \in \real^M\times \integer^N,                   
 \end{array}\label{bright2}
\end{equation}
where $y^k\in\partial H_{\mu^k_2}(x^k)$. 
This is equivalent to solving the continuous convex program: 
\begin{equation}
\mbox{min}\ \tilde{G}^{\rm cl}_{\mu^{k}_1}(x)- \langle y^k,x-x^k\rangle. \label{bright3}
\end{equation}
The overall 
framework of the smoothing SCMIP is described as follows:
\begin{center}
\underline{\sc{Smoothing SCMIP}}
\end{center}
\begin{description}
\item[Step~0:]
Choose $x^0 \in \real^n$, $u^0=(\mu^0_1,\mu^0_2)\in\real_+^2$, and $\gamma\in (0,1)$.
Set $k=0$
\item[Step~1:]
Set ${y^{k}}\in\partial H_{\mu^k_2}(x^k)$ ($y^{k}=\nabla H_{\mu^k_2}(x^k)$ if $\mu^k_2>0$) and solve \eqref{bright2}
to obtain $x^{k+1}$.
\item[Step~2:]
If stopping criterion is satisfied stop, 
\item[\phantom{\quad \quad}]
else set 
$u^{k+1}=\gamma u^k$, and $k=k+1$ and go to Step 1
\end{description}
\begin{rem}
Note that in Step~0 we do not restrict the scope of $\{u^k\}$ to the positive orthant $\real^2_{++}$ for general versatility. 
If we set $u^k=0$ for all $k\ge 0$, the above algorithm is nothing but the SCMIP without the smoothing technique.
Also, if we set $\mu_2^k=0$ for all $k\ge 0$, the smoothing is applied 
only to the function $g$, leaving $h$ as it is. 
\end{rem}
\subsection*{Convergence analysis for the smoothing SCMIP}
In this section, we show that generated sequences $\{x^k\}$ and $\{y^k\}$ respectively converge 
to stationary points of the DC problem \eqref{prob3} and its Toland-Singer dual 
in the sense of Definition~\ref{def-stationary} replacing $g$ with $\tilde{g}^{\rm cl}$.  

Throughout our convergence analysis, we suppose that 
function $g$ has an effective domain with nonempty interior and $\dom h$ is the full space for simplicity of expression.
Moreover, we assume that the smoothing functions $G$ and $H$ are chosen so that 
they satisfy properties $(P1)$ and $(P2)$.
That is to say, we suppose that the following Assumption~$(A0)$ holds in the subsequent analysis:
{\begin{description}
\item[$(A0)$]
$
{\rm int}\,\dom g\neq \emptyset,\ \dom h=\real^n,\ \mbox{and $G$ and $H$ satisfy }(P1)\mbox{ and }(P2).
$
\end{description}}
Furthermore, we make the following assumptions:
\begin{description}
\item[$(A1)$] The set $\Omega_0:=\left\{x\in \real^n\mid f(x)\le f(x^1)+\kappa(\mu^1_1+\mu^1_2)\right\}\cap  \mathcal{F}$ is nonempty and compact, where $\mathcal{F}$ denotes the feasible region of \eqref{prob1}, i.e.,
$\mathcal{F}=S\cap\left(\real^M\times\integer^N\right)$ 
and $\kappa$ is a positive constant prescribed in $(P2)$.
\item[$(A2)$] 
For any $\mu\ge 0$,
$H_{\mu}$ is strongly convex with a modulus $\tau>0$, i.e., 
 \begin{align*}
 \alpha H_{\mu}(x)+(1-\alpha)H_{\mu}(z)-H_{\mu}(\alpha x + (1-\alpha) z)\ge \alpha(1-\alpha)\tau\|x-z\|^2/2
 \end{align*}
holds for any $x,z\in \real^n$ and $\alpha\in [0,1]$.
\item[$(A3)$] $\dom f\cap \mathcal{F}\neq \emptyset$.
\end{description}
Notice the following:
Assumption~$(A1)$ holds when $f$ is coercive on $\mathcal{F}$, 
that is, $f(x^{\ell})\to\infty$ as $\ell\to \infty$
for any sequence $\{x^{\ell}\}$ such that $\|x^{\ell}\|\to\infty$ and $x^{\ell}\in \mathcal{F}$ for all $\ell\ge 1$.
Assumption~$(A2)$ can be 
satisfied
by considering an alternative DC-decomposition
$f(\cdot) = (g(\cdot) + \tau \|\cdot\|^2/2)- (h(\cdot)+ \tau \|\cdot\|^2/2)$ 
and smoothing the function $h$, 
although the set of stationary points may change 
(see Example~1 in Section~\ref{sec4}). 
By $(A3)$, we require that \eqref{prob1} has at least one feasible point taking a finite value.

We begin by proving finiteness of the optimal value of the problem of interest.
\begin{proposition}\label{kesukamo}
Suppose that Assumptions\,$(A1)$ and $(A3)$ hold.
Then, the optimal value of \eqref{prob1}, i.e, \eqref{prob3} is finite.
\end{proposition}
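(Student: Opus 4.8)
The plan is to show that the optimal value $v^{\ast}:=\inf_{x\in\mathcal{F}}f(x)$ of \eqref{prob1} is both bounded above and bounded below; since \eqref{prob1} and \eqref{prob3} share the same optimal value by Theorem~\ref{thm:1023} (with the choice $\hat{h}=h$), this settles the claim. Boundedness above is immediate from $(A3)$: picking any $\hat{x}\in\dom f\cap\mathcal{F}$ yields a feasible point with $f(\hat{x})<\infty$, whence $v^{\ast}\le f(\hat{x})<\infty$.

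The substance is the lower bound $v^{\ast}>-\infty$, for which I would exploit $(A1)$. Write $c:=f(x^1)+\kappa(\mu^1_1+\mu^1_2)$ for the threshold defining $\Omega_0$, so that $\Omega_0=\{x\in\mathcal{F}\mid f(x)\le c\}$. Since $(A1)$ guarantees $\Omega_0\neq\emptyset$, fix some $x^0\in\Omega_0$, so that $f(x^0)\le c$. The key reduction is the identity $\inf_{x\in\mathcal{F}}f(x)=\inf_{x\in\Omega_0}f(x)$: the inequality ``$\ge$'' is trivial from $\Omega_0\subseteq\mathcal{F}$, and for ``$\le$'' I would note that every $x\in\mathcal{F}\setminus\Omega_0$ satisfies $f(x)>c\ge f(x^0)\ge\inf_{\Omega_0}f$, so points lying outside $\Omega_0$ cannot undercut the infimum already taken over $\Omega_0$. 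Thus it suffices to bound $f$ below on the compact set $\Omega_0$ rather than on the possibly unbounded feasible region $\mathcal{F}$.

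Finally I would invoke lower semicontinuity together with compactness. Under the standing assumption $(A0)$ we have $\dom h=\real^n$, so $h$ is a finite-valued convex function and therefore continuous on $\real^n$; since $g$ is closed, hence lower semicontinuous, the difference $f=g-h$ is lower semicontinuous. A lower semicontinuous function attains its minimum on the compact set $\Omega_0$, and because $f$ takes values in $\real\cup\{+\infty\}$ (never $-\infty$), this minimum is a real number, giving $\inf_{\Omega_0}f>-\infty$. Combining this with the reduction above and the upper bound yields $-\infty<v^{\ast}<\infty$. The main obstacle I anticipate is precisely this lower-bound half: one must justify the reduction to the compact sublevel set $\Omega_0$ and secure lower semicontinuity of the nonconvex difference $f=g-h$, which is exactly where the continuity of $h$ furnished by $\dom h=\real^n$ becomes essential.
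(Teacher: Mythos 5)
Your proof is correct and follows essentially the same route as the paper: the lower bound comes from compactness of the sublevel set $\Omega_0$ guaranteed by $(A1)$ combined with lower semicontinuity of $f$ (the paper phrases this as a contradiction via a minimizing sequence in $\Omega_0$, you phrase it as Weierstrass attainment after explicitly reducing the infimum to $\Omega_0$), and the upper bound is immediate from $(A3)$. One trivial labeling slip: in your identity $\inf_{x\in\mathcal{F}}f(x)=\inf_{x\in\Omega_0}f(x)$, the inclusion $\Omega_0\subseteq\mathcal{F}$ actually gives the ``$\le$'' direction and your sublevel-set argument gives ``$\ge$'', so the two labels should be interchanged, though both arguments themselves are sound.
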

\begin{proof}
We first show that $\inf_{x\in \mathcal{F}} f(x)>-\infty$.
Suppose that the contrary is true.
Then, there exists some sequence $\{x^{l}\}\subseteq \Omega_0$ such that
$
\lim_{l\to \infty}f(x^l)=-\infty.
$
where $\Omega_0$ is the nonempty and compact set defined in Assumption~$(A1)$.
Therefore, from Assumption\,$(A1)$, $\{x^l\}$ is bounded and has an accumulation point in $\Omega_0$, say $x^{\ast}\in \Omega_0$, and 
we can assume $\lim_{l\to \infty}x^l=x^{\ast}$ without loss of generality.
As $f$ is lower semicontinuous, $f(x^{\ast})\le \lim_{k\to \infty}f(x^k)=-\infty$.
However, this is a contradiction.
Therefore,  $\inf_{x\in \mathcal{F}} f(x)>-\infty$.
We also obtain $\inf_{x\in \mathcal{F}} f(x)<\infty$ in view of Assumption~$(A3)$. Hence, we have the desired result.
\end{proof}
{\begin{lemma}\label{lem:1222}
For any $x\in \real^n$ and $u=(\mu_1,\mu_2)\in\real^2_{+}$, 
the following relations hold:
\begin{equation}
g(x)\le G_{\mu_1}(x)\le g(x)+\kappa\mu_1,\ h(x)\le H_{\mu_2}(x)\le h(x)+\kappa\mu_2,\label{eq:1222}
\end{equation}
in addition, 
\begin{equation}
f(x)-\kappa\mu_2\le f_{u}(x)\le f(x)+\kappa\mu_1. \label{eq:1222-2}
\end{equation}
\end{lemma}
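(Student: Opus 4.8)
The plan is to derive everything from property $(P2)$ of the smoothing functions together with the limiting behaviour encoded in Definition~\ref{def:smoothing}. The inequalities split naturally into two independent one-sided estimates---one for $G_{\mu_1}$ around $g$ and one for $H_{\mu_2}$ around $h$---after which \eqref{eq:1222-2} follows by pure arithmetic applied to $f_{u} = G_{\mu_1} - H_{\mu_2}$. So I would first establish the two chains in \eqref{eq:1222} separately and then subtract them.

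For the bound on $G_{\mu_1}$, fix $x \in \real^n$ and treat $\mu_1 = 0$ separately: by item $(4)$ of Definition~\ref{def:smoothing} we have $G_0(x) = g(x)$, so all three quantities coincide and the inequalities hold trivially (here $\kappa\mu_1 = 0$). For $\mu_1 > 0$, the difficulty is that $(P2)$ is stated only for strictly positive arguments $0 < \mu_1 \le \mu_2$, and hence does not directly compare $G_{\mu_1}(x)$ with $g(x) = G_0(x)$. I would bridge this gap by applying $(P2)$ to the pair $(\epsilon,\mu_1)$ for an arbitrary $0 < \epsilon \le \mu_1$, giving
\[
0 \le G_{\mu_1}(x) - G_{\epsilon}(x) \le \kappa(\mu_1 - \epsilon),
\]
and then letting $\epsilon \to +0$. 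For $x \in \dom g$, item $(3)$ of Definition~\ref{def:smoothing} guarantees $G_{\epsilon}(x) \to g(x)$, so the squeeze yields $0 \le G_{\mu_1}(x) - g(x) \le \kappa\mu_1$, which is exactly $g(x) \le G_{\mu_1}(x) \le g(x) + \kappa\mu_1$. For $x \notin \dom g$ both sides are $+\infty$ by item $(1)$ (so that $\dom G_{\mu_1} = \dom g$), and the bound holds under the paper's convention. The identical argument applied to $H$ gives $h(x) \le H_{\mu_2}(x) \le h(x) + \kappa\mu_2$; here Assumption~$(A0)$ ensures $\dom h = \real^n$, so $h(x)$ is always finite and no boundary case arises.

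Finally I would combine the two chains. Using $G_{\mu_1}(x) \le g(x) + \kappa\mu_1$ together with $-H_{\mu_2}(x) \le -h(x)$ gives $f_{u}(x) = G_{\mu_1}(x) - H_{\mu_2}(x) \le f(x) + \kappa\mu_1$; symmetrically, $G_{\mu_1}(x) \ge g(x)$ and $-H_{\mu_2}(x) \ge -h(x) - \kappa\mu_2$ give $f_{u}(x) \ge f(x) - \kappa\mu_2$, which is \eqref{eq:1222-2}.

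The only genuinely non-routine step is the extension of $(P2)$ from $0 < \mu_1$ down to $\mu_1 = 0$; everything else is arithmetic. The subtlety I would be most careful about is invoking item $(3)$ correctly: it is phrased as a joint limit as $(z,\mu) \to (x,+0)$, and I rely on the fact that an existing joint limit forces the same value along the restricted path $z \equiv x$ as $\mu \to +0$. I would also note in passing that the $\infty$-arithmetic convention adopted in the paper is what keeps the $x \notin \dom g$ case consistent.
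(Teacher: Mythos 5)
Your proposal is correct and follows essentially the same route as the paper: both handle $x\notin\dom g$ via $\dom G_{\mu_1}=\dom g$, bridge the gap in $(P2)$ at $\mu=0$ by applying it on $(\epsilon,\mu_1)$ with $0<\epsilon\le\mu_1$ and letting $\epsilon\to+0$ using Definition~\ref{def:smoothing}(3)--(4), and then obtain \eqref{eq:1222-2} by subtracting the two chains in \eqref{eq:1222}. Your explicit remarks on the $\mu_1=0$ case and on restricting the joint limit in item $(3)$ to the path $z\equiv x$ are details the paper leaves implicit, but they do not change the argument.
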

\begin{proof}
We first prove \eqref{eq:1222}.
Choose $x\in \real^n$ and $u\in\real^2_{+}$ arbitrarily.
Let us consider the case where $x\in \dom f=\dom g$.
From $(P2)$, we obtain 
$G_{\mu}(x)\le G_{\mu_1}(x)\le G_{\mu}(x)+\kappa(\mu_1-\mu)$.
Using these facts and $\lim_{\mu\to +0}G_{\mu}(x)=g(x)$, and 
$G_{0}(x)=g(x)$ by Definition\,\ref{def:smoothing},
we have $g(x)\le G_{\mu_1}(x)\le g(x)+\kappa\mu_1$.
When $x\notin \dom f=\dom g$, by $\dom G_{\mu}=\dom g$ from the definition of $G_{\mu_1}$, 
it follows that $g(x)=G_{\mu_1}(x)=\infty$. Thus, $g(x)\le G_{\mu_1}(x)\le g(x)+\kappa\mu_1$ is obvious. 
Similarly, we have $h(x)\le H_{\mu_2}(x)\le h(x)+\kappa\mu_2$.
Using \eqref{eq:1222} together with 
$f=g-h$ and $f_{u}=G_{\mu_1}-H_{\mu_2}$,
we readily obtain \eqref{eq:1222-2}.
\end{proof}
}
We next state some technical lemmas for establishing convergence properties of the proposed algorithm.
The following lemma assures that the 
function $\tilde{G}_{\mu}^{\rm cl}$ is well behaved. 
\begin{lemma}\label{lem:1021-2}
For any $\mu\ge 0$, the following statements hold. 
\begin{enumerate}
\item[$({\rm i})$] $\tilde{g}^{\rm cl}(x)\le 
\tilde{G}^{\rm cl}_{\mu}(x)\le 
\kappa\mu+\tilde{g}^{\rm cl}(x)$ for $x\in \dom g\cap S$
and 
\item[$({\rm ii})$] $\dom g\cap S\supseteq\dom \tilde{g}^{\rm cl}=\dom \tilde{G}^{\rm cl}_{\mu}$.
\item[$({\rm iii})$] 
Under Assumption~$(A3)$, 
${\rm ri}\,\dom \tilde{G}^{\rm cl}_{\mu}\neq \emptyset$.
\end{enumerate}
\end{lemma}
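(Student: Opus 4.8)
The plan is to reduce all three statements to the pointwise bounds already recorded in Lemma~\ref{lem:1222}, together with three elementary properties of the convex-closure operation $(\cdot)^{\rm cl}$ that can be read off directly from the epigraph definition (Definition~\ref{def-conv}): (a) \emph{monotonicity}, i.e. if $\psi_1\le\psi_2$ on $\real^M\times\integer^N$ then $\psi_1^{\rm cl}\le\psi_2^{\rm cl}$; (b) \emph{commutation with a constant shift}, i.e. $(\psi+c)^{\rm cl}=\psi^{\rm cl}+c$; and (c) $\psi^{\rm cl}\le\psi$ on $\real^M\times\integer^N$. All three are immediate from the epigraph picture: passing from a function to its epigraph reverses pointwise inequalities, a vertical translation commutes with $\clco$, and $\epir{\real^M\times\integer^N}\psi\subseteq\clco\epir{\real^M\times\integer^N}\psi=\epir{\real^n}\psi^{\rm cl}$ gives (c).

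For $({\rm i})$ I would first invoke Lemma~\ref{lem:1222}, which gives $g\le G_{\mu}\le g+\kappa\mu$ on $\real^n$ together with $\dom G_{\mu}=\dom g$; restricting these to $\real^M\times\integer^N$ yields $\tilde g\le\tilde G_{\mu}\le\tilde g+\kappa\mu$. Applying monotonicity (a) to the left inequality produces $\tgcl\le\tilde G^{\rm cl}_{\mu}$, and applying (a) together with the shift rule (b) to the right inequality produces $\tilde G^{\rm cl}_{\mu}\le(\tilde g+\kappa\mu)^{\rm cl}=\tgcl+\kappa\mu$. Both bounds hold on all of $\real^n$, and in particular they give the claimed chain on $\dom g\cap S$.

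For $({\rm ii})$ I would treat the inclusion and the equality separately. For the inclusion, note that $g+\delta_S$ is a \emph{closed} proper convex function (because $g$ is closed proper convex and $S$ is closed convex) whose restriction to $\real^M\times\integer^N$ is exactly $\tilde g$; hence $\epir{\real^n}(g+\delta_S)$ is a closed convex set containing $\epir{\real^M\times\integer^N}\tilde g$, so it contains $\clco\epir{\real^M\times\integer^N}\tilde g=\epir{\real^n}\tgcl$, which means $g+\delta_S\le\tgcl$ pointwise and therefore $\dom\tgcl\subseteq\dom(g+\delta_S)=\dom g\cap S$. The two domains then coincide: the lower bound from $({\rm i})$ gives $\dom\tilde G^{\rm cl}_{\mu}\subseteq\dom\tgcl$, while the upper bound from $({\rm i})$, valid on $\dom g\cap S\supseteq\dom\tgcl$, forces $\tilde G^{\rm cl}_{\mu}$ to be finite wherever $\tgcl$ is, giving $\dom\tgcl\subseteq\dom\tilde G^{\rm cl}_{\mu}$. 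The one point that genuinely needs care here is that the extension used in the inclusion step must be \emph{closed}, which is why I use $g+\delta_S$ and, crucially, do \emph{not} need $G_{\mu}+\delta_S$ to be closed: the inclusion $\dom\tilde G^{\rm cl}_{\mu}\subseteq\dom\tgcl$ is obtained from the already-proven monotone bound of $({\rm i})$ rather than from a closed-extension argument for $G_{\mu}$.

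For $({\rm iii})$ I would combine $({\rm ii})$ with two standard facts. By $({\rm ii})$ the set $\dom\tilde G^{\rm cl}_{\mu}=\dom\tgcl$ is the effective domain of a convex function, hence convex; and by property (c), $\tgcl\le\tilde g$ on $\real^M\times\integer^N$, so Assumption~$(A3)$—which amounts to $\dom\tilde g=\dom f\cap\mathcal F\neq\emptyset$—provides a mixed-integer point at which $\tgcl$ is finite, whence $\dom\tgcl\neq\emptyset$. Since every nonempty convex subset of $\real^n$ has nonempty relative interior, $\ri\dom\tilde G^{\rm cl}_{\mu}\neq\emptyset$, which is the claim. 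I expect the only nonroutine part of the whole argument to be the closedness bookkeeping in $({\rm ii})$ described above; everything else is epigraph manipulation plus the elementary relative-interior fact.
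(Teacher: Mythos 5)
Your proof is correct and follows essentially the same route as the paper's: part (i) via the epigraph inclusions induced by Lemma~\ref{lem:1222} (your monotonicity and constant-shift properties of $(\cdot)^{\rm cl}$ are exactly those inclusions stated functionally), part (ii) by combining a domain-inclusion argument with the two-sided bound of (i), and part (iii) from nonemptiness and convexity of $\dom\tgcl$. The only difference is cosmetic: where the paper tersely asserts that $x\in\dom\tgcl$ forces $x\in S\cap\dom g$, you justify it explicitly via the closed convex extension $g+\delta_S\le\tgcl$, which is a clarification of the same step rather than a different method.
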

\begin{proof}
\begin{enumerate}
\item[(i)] 
From Lemma\,\ref{lem:1222},
$
g(x)\le G_{\mu}(x)\le g(x)+\kappa\mu$ for any 
$x\in\dom g\cap S$ and $\mu\ge 0$.
Thus we have $\epir{\real^M\times \integer^N}\tilde{g} 
\supseteq \epir{\real^M\times \integer^N} \tilde{G}_{\mu}
\supseteq \epir{\real^M\times \integer^N} \tilde{g}+(0,\kappa\mu)$.
Hence,
$\clco\epir{\real^M\times\integer^N} \tilde{g} 
\supseteq\clco\epir{\real^M\times \integer^N}\tilde{G}_{\mu}
\supseteq \clco\epir{\real^M\times \integer^N} \tilde{g}+(0,\kappa\mu)$ follows.
We then obtain
$
\epir{\real^n} \tilde{g}^{\rm cl}\supseteq \epir{\real^n} \tilde{G}^{\rm cl}_{\mu}
\supseteq \epir{\real^n} \tilde{g}^{\rm cl}+(0,\kappa\mu)
$ implying
$
\tilde{g}^{\rm cl}(x)
\le 
\tilde{G}^{\rm cl}_{\mu}(x)\le 
\kappa\mu+\tilde{g}^{\rm cl}(x)$.
\item[(ii)] 
Choose $x\in \dom \tilde{g}^{\rm cl}$ arbitrarily.
By the definition of $\tilde{g}^{\rm cl}$, $x\in S$ follows.
In addition, if $x\notin\dom g$, $\tilde{g}^{\rm cl}(x)=g(x)=+\infty$, which contradicts $x\in \dom\tilde{g}^{\rm cl}$ and thus $x\in\dom g$ holds.
Therefore, we obtain the first inclusion.
The second equality is readily obtained from (i).
\item[(iii)]
We can easily derive $\dom \tgcl\neq \emptyset$ from 
$\dom g \cap \mathcal{F}\neq \emptyset$ by $(A3)$.
From the nonemptiness and convexity of $\dom \tgcl$, 
we obtain ${\rm ri}\,\dom\tilde{g}^{\rm cl}\neq \emptyset$.
Then, (ii) yields  ${\rm ri}\,\dom \tilde{G}^{\rm cl}_{\mu}\neq \emptyset$.
\end{enumerate}
\end{proof}
We now
give one proposition and one lemma concerning accumulation points of values and subgradients of $\tilde{G}^{\rm cl}_{\mu}$ and $H_{\mu}$.
\begin{lemma}\label{lem:1012}
Let $x^{\ast}\in \dom \tilde{g}^{\rm cl}$
and consider sequences $\{x^{\ell}\}\subseteq \dom \tilde{g}^{\rm cl}$ and 
$\{u^{\ell}\}\subseteq \real^2_{+}$ such that $x^{\ell}\to x^{\ast}$ and $u^{\ell}\to 0$ as 
$\ell$ tends to $\infty$. Then,
$
{\displaystyle \lim_{\ell\to \infty}\tilde{G}_{\mu^{\ell}_1}^{\rm cl}(x^{\ell})=\tilde{g}^{\rm cl}(x^{\ast})}
$
holds.
\end{lemma}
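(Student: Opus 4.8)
The plan is to reduce the assertion to the continuity of the single convex function $\tilde{g}^{\rm cl}$ along domain sequences, using the uniform two-sided estimate furnished by Lemma~\ref{lem:1021-2}. Since $x^{\ell},x^{\ast}\in\dom\tilde{g}^{\rm cl}\subseteq\dom g\cap S$ by Lemma~\ref{lem:1021-2}(ii), part (i) of the same lemma applies at each $x^{\ell}$ and gives $0\le\tilde{G}^{\rm cl}_{\mu^{\ell}_1}(x^{\ell})-\tilde{g}^{\rm cl}(x^{\ell})\le\kappa\mu^{\ell}_1$. Because $u^{\ell}\to 0$ forces $\mu^{\ell}_1\to 0$, this difference vanishes, so the two-parameter limit collapses to a one-parameter one: it suffices to prove
\[
\lim_{\ell\to\infty}\tilde{g}^{\rm cl}(x^{\ell})=\tilde{g}^{\rm cl}(x^{\ast}),
\]
a finite number since $x^{\ast}\in\dom\tilde{g}^{\rm cl}$ and $\tilde{g}^{\rm cl}$ is proper. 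Once this is known, the estimate above immediately yields $\tilde{G}^{\rm cl}_{\mu^{\ell}_1}(x^{\ell})\to\tilde{g}^{\rm cl}(x^{\ast})$.

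Half of this continuity is free. As a convex closure, $\tilde{g}^{\rm cl}$ has a closed (convex) epigraph by Definition~\ref{def-conv}, hence is lower semicontinuous, giving $\liminf_{\ell}\tilde{g}^{\rm cl}(x^{\ell})\ge\tilde{g}^{\rm cl}(x^{\ast})$; combined with the estimate this already secures $\liminf_{\ell}\tilde{G}^{\rm cl}_{\mu^{\ell}_1}(x^{\ell})\ge\tilde{g}^{\rm cl}(x^{\ast})$. What remains, and what I expect to be the main obstacle, is the reverse inequality $\limsup_{\ell}\tilde{g}^{\rm cl}(x^{\ell})\le\tilde{g}^{\rm cl}(x^{\ast})$, i.e. upper semicontinuity of $\tilde{g}^{\rm cl}$ at $x^{\ast}$ relative to $\dom\tilde{g}^{\rm cl}$. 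This is genuinely delicate when $x^{\ast}$ lies on the relative boundary of the domain, because a closed convex function is automatically continuous only on $\ri\dom$ and may fail to be upper semicontinuous at boundary points; thus the generic slogan ``convex functions are continuous on their relative interior'' does not by itself suffice.

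To establish this upper semicontinuity I would exploit that $\tilde{g}^{\rm cl}$ is not an arbitrary convex function but the convex closure of a function carried by $\real^M\times\integer^N$: its domain is the closure of the convex hull of the integer slices $\{s\in\real^M:(s,z)\in\dom g\cap S\}$ taken over $z\in\integer^N$. The tool I would invoke is the classical fact that a finite convex function is upper semicontinuous relative to any locally simplicial (in particular, polyhedral) convex subset of its domain, which together with lower semicontinuity yields continuity up to the boundary. The technical heart is then to certify that $\dom\tilde{g}^{\rm cl}$ possesses this local structure near $x^{\ast}$; here Lemma~\ref{lem:1021-2}(iii) supplies, under $(A3)$, a point of $\ri\dom\tilde{g}^{\rm cl}$ that I would use to anchor the segment/recession reduction from interior continuity to the boundary value at $x^{\ast}$. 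Once upper semicontinuity is in hand, the squeeze $\tilde{g}^{\rm cl}(x^{\ast})\le\liminf_{\ell}\tilde{g}^{\rm cl}(x^{\ell})\le\limsup_{\ell}\tilde{g}^{\rm cl}(x^{\ell})\le\tilde{g}^{\rm cl}(x^{\ast})$ forces $\tilde{g}^{\rm cl}(x^{\ell})\to\tilde{g}^{\rm cl}(x^{\ast})$ and closes the argument.
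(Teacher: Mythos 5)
Your first paragraph reproduces the paper's own proof: by Lemma~\ref{lem:1021-2}(ii) each $x^{\ell}$ lies in $\dom g\cap S$, so Lemma~\ref{lem:1021-2}(i) gives the sandwich $\tilde{g}^{\rm cl}(x^{\ell})\le \tilde{G}^{\rm cl}_{\mu^{\ell}_1}(x^{\ell})\le \tilde{g}^{\rm cl}(x^{\ell})+\kappa\mu^{\ell}_1$, and since $\mu^{\ell}_1\to 0$ the claim reduces to $\tilde{g}^{\rm cl}(x^{\ell})\to\tilde{g}^{\rm cl}(x^{\ast})$. At exactly this point the paper simply invokes ``continuity of $\tilde{g}^{\rm cl}$ on $\dom\tilde{g}^{\rm cl}$'' with no argument. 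You are more careful than the paper in isolating this as the crux, and you are right that it is not automatic: closedness of the epigraph gives lower semicontinuity for free, but a closed proper convex function can fail to be upper semicontinuous, relative to its domain, at a relative-boundary point (the standard example $\phi(x_1,x_2)=x_2^2/x_1$ for $x_1>0$, $\phi(0,0)=0$, $\phi=+\infty$ elsewhere, where $\phi\equiv 1$ along $x_1=x_2^2$ but $\phi(0,0)=0$).

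The gap is that the tool you propose to supply this upper semicontinuity does not apply under the paper's hypotheses. Rockafellar's theorem (finite convex functions are u.s.c.\ relative to locally simplicial sets) needs $\dom\tilde{g}^{\rm cl}$ to be locally simplicial near $x^{\ast}$, and you explicitly defer this as ``the technical heart'' without proving it; in fact it is false in general. The paper assumes only that $S$ is closed convex and $g$ is closed proper convex (continuous on $\dom g$, by Definition~\ref{def:smoothing}), so the integer slices $D_z=\{x_M : (x_M,z)\in \dom g\cap S\}$, $z\in\integer^N$, are arbitrary closed convex sets. If $|M|\ge 2$ and the $D_z$ are, say, Euclidean balls, then $\dom\tilde{g}^{\rm cl}=\clco\bigl(\bigcup_z D_z\times\{z\}\bigr)$ has smooth, non-polyhedral boundary portions and is not locally simplicial, so the theorem you cite gives nothing there. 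Your fallback anchor --- a point of $\ri\dom\tilde{g}^{\rm cl}$ from Lemma~\ref{lem:1021-2}(iii) plus a segment/recession argument --- only controls the limit along the segment (i.e.\ it bounds the value at $x^{\ast}$ by limits from the relative interior), which is precisely the ``continuity on $\ri\dom$'' statement you correctly said is insufficient for an arbitrary sequence $x^{\ell}\to x^{\ast}$ in the domain. So as written, your argument proves the lemma only when $x^{\ast}\in\ri\dom\tilde{g}^{\rm cl}$, or when the domain happens to be polyhedral (e.g.\ $\dom g=\real^n$ and $S$ polyhedral); the boundary case remains open. To be fair, this is the same hole the paper's proof papers over by assertion: your write-up has the merit of making the hidden continuity assumption explicit, but it does not close it.
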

\begin{proof}
From Lemma\,\ref{lem:1021-2}(ii) and $\{x^{\ell}\}\subseteq \dom \tilde{g}^{\rm cl}$, 
$\{x^{\ell}\}\subseteq \dom g\cap\mathcal{F}$ follows.
Then, by using Lemma\,\ref{lem:1021-2}(i), we have
\begin{align*}
\tilde{g}^{\rm cl}(x^{\ell})
&\le 
\tilde{G}^{\rm cl}_{\mu^{\ell}_1}(x^{\ell})\le 
\kappa\mu^{\ell}_1+\tilde{g}^{\rm cl}(x^{\ell}).
\end{align*}
Letting $\ell\to \infty$ here leads to  
\begin{align*}
\tilde{g}^{\rm cl}(x^{\ast})&\le 
\lim_{\ell\to \infty}\tilde{G}^{\rm cl}_{\mu^{\ell}_1}(x^{\ell})\le 
\tilde{g}^{\rm cl}(x^{\ast}),
\end{align*}
where we use continuity of $\tilde{g}^{\rm cl}$ on $\dom\tilde{g}^{\rm cl}$ 
and $\lim_{\ell\to \infty}x^{\ell}=x^{\ast}\in \dom \tilde{g}^{\rm cl}$. 
Hence $\lim_{\ell\to \infty}\tilde{G}^{\rm cl}_{\mu^{\ell}_1}(x^{\ell})=\tilde{g}^{\rm cl}(x^{\ast})$ holds.
\end{proof}
\begin{lemma}\label{prop:1014}
Let $x^{\ast}\in \dom \tilde{g}^{\rm cl}$, $v^{\ast}\in \real^n$, and 
$w^{\ast}\in\real^n$.
Consider sequences $\{x^{\ell}\}\subseteq \dom \tilde{g}^{\rm cl}$, $\{u^{\ell}\}\subseteq \real^2_{+}$,
$\{v^{\ell}\}\subseteq \real^n$ and $\{w^{\ell}\}\subseteq \real^n$ 
satisfying the next conditions:
\begin{itemize}
\item 
$v^{\ell}\in \partial \tilde{G}^{\rm cl}_{\mu^{\ell}_1}(x^{\ell})$ and 
$w^{\ell}\in \partial H_{\mu^{\ell}_2}(x^{\ell})$
for any $\ell\ge 1$, and   
\item 
$
\lim_{\ell\to \infty}\left(x^{\ell},u^{\ell},v^{\ell},w^{\ell}\right)=\left(x^{\ast},0,v^{\ast},w^{\ast}\right).
$
\end{itemize}
Then, $v^{\ast}\in \partial\tilde{g}^{\rm cl}(x^{\ast})$ and $w^{\ast}\in \partial h(x^{\ast})$.
\end{lemma}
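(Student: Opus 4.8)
The plan is to pass to the limit in the two subgradient inequalities, handling the claims $v^{\ast}\in\partial\tilde{g}^{\rm cl}(x^{\ast})$ and $w^{\ast}\in\partial h(x^{\ast})$ by the same mechanism. For the first, I would fix an arbitrary test point $z$. If $z\notin\dom\tilde{g}^{\rm cl}$, the defining inequality $\tilde{g}^{\rm cl}(z)\ge\tilde{g}^{\rm cl}(x^{\ast})+\langle v^{\ast},z-x^{\ast}\rangle$ holds trivially, since its left side is $+\infty$ while its right side is finite (as $x^{\ast}\in\dom\tilde{g}^{\rm cl}$). Thus only $z\in\dom\tilde{g}^{\rm cl}$ needs treatment, and by Lemma~\ref{lem:1021-2}(ii) such $z$ satisfies $z\in\dom g\cap S$, which is exactly the range on which the upper estimate of Lemma~\ref{lem:1021-2}(i) is valid.

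For such $z$, the hypothesis $v^{\ell}\in\partial\tilde{G}^{\rm cl}_{\mu^{\ell}_1}(x^{\ell})$ gives $\tilde{G}^{\rm cl}_{\mu^{\ell}_1}(z)\ge\tilde{G}^{\rm cl}_{\mu^{\ell}_1}(x^{\ell})+\langle v^{\ell},z-x^{\ell}\rangle$. I would chain this with the upper bound $\tilde{G}^{\rm cl}_{\mu^{\ell}_1}(z)\le\kappa\mu^{\ell}_1+\tilde{g}^{\rm cl}(z)$ of Lemma~\ref{lem:1021-2}(i), obtaining $\kappa\mu^{\ell}_1+\tilde{g}^{\rm cl}(z)\ge\tilde{G}^{\rm cl}_{\mu^{\ell}_1}(x^{\ell})+\langle v^{\ell},z-x^{\ell}\rangle$. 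Letting $\ell\to\infty$, the left side tends to $\tilde{g}^{\rm cl}(z)$ since $\mu^{\ell}_1\to 0$; on the right, Lemma~\ref{lem:1012} supplies $\tilde{G}^{\rm cl}_{\mu^{\ell}_1}(x^{\ell})\to\tilde{g}^{\rm cl}(x^{\ast})$, while $\langle v^{\ell},z-x^{\ell}\rangle\to\langle v^{\ast},z-x^{\ast}\rangle$ by continuity of the inner product together with $(v^{\ell},x^{\ell})\to(v^{\ast},x^{\ast})$. This yields the subgradient inequality for $\tilde{g}^{\rm cl}$ at $x^{\ast}$, hence $v^{\ast}\in\partial\tilde{g}^{\rm cl}(x^{\ast})$.

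The second claim follows the same template but is simpler, because under Assumption~$(A0)$ one has $\dom h=\real^n$, so no domain bookkeeping is needed and every $z$ is admissible. From $w^{\ell}\in\partial H_{\mu^{\ell}_2}(x^{\ell})$ I have $H_{\mu^{\ell}_2}(z)\ge H_{\mu^{\ell}_2}(x^{\ell})+\langle w^{\ell},z-x^{\ell}\rangle$. Lemma~\ref{lem:1222} provides the upper bound $H_{\mu^{\ell}_2}(z)\le h(z)+\kappa\mu^{\ell}_2$ at the test point and the squeeze $h(x^{\ell})\le H_{\mu^{\ell}_2}(x^{\ell})\le h(x^{\ell})+\kappa\mu^{\ell}_2$ at $x^{\ell}$; since $h$ is finite-valued on all of $\real^n$ it is continuous, so $h(x^{\ell})\to h(x^{\ast})$ and therefore $H_{\mu^{\ell}_2}(x^{\ell})\to h(x^{\ast})$. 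Passing to the limit exactly as before delivers $h(z)\ge h(x^{\ast})+\langle w^{\ast},z-x^{\ast}\rangle$ for every $z$, i.e.\ $w^{\ast}\in\partial h(x^{\ast})$.

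I expect the only genuinely delicate point to be the asymmetry in the estimates of Lemma~\ref{lem:1021-2}(i): one must use the \emph{upper} bound on $\tilde{G}^{\rm cl}_{\mu^{\ell}_1}$ at the fixed $z$ (so that the left side passes to $\tilde{g}^{\rm cl}(z)$ in the correct direction) while relying on the \emph{exact} limit of the value at $x^{\ell}$ from Lemma~\ref{lem:1012}. Making these two directions line up, and confirming via Lemma~\ref{lem:1021-2}(ii) that the admissible test points $z\in\dom\tilde{g}^{\rm cl}$ lie in the region $\dom g\cap S$ where the bound is available, is the crux; the remainder is a routine limit in the subgradient inequality.
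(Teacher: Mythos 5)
Your proposal is correct and follows essentially the same route as the paper: reduce to the subgradient inequality, dispose of test points outside $\dom\tilde{g}^{\rm cl}$ trivially, pass to the limit in the subgradient inequality of $\tilde{G}^{\rm cl}_{\mu^{\ell}_1}$, and invoke Lemma~\ref{lem:1012} for the values at the moving points $x^{\ell}$ (and the analogous squeeze for $H_{\mu^{\ell}_2}$). The only cosmetic difference is that at the fixed test point you control $\tilde{G}^{\rm cl}_{\mu^{\ell}_1}(z)$ via the explicit upper bound of Lemma~\ref{lem:1021-2}(i), whereas the paper writes the test point as $x^{\ell}+z^{\ell}$ with $z^{\ell}:=x^{\ast}+z-x^{\ell}$ and applies Lemma~\ref{lem:1012} there as well; these are interchangeable, since Lemma~\ref{lem:1012} is itself proved by that same sandwich.
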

\begin{proof}
It suffices to show 
\begin{align}
&\tilde{g}^{\rm cl}(x^{\ast}+z)-\tilde{g}^{\rm cl}(x^{\ast})\ge \langle v^{\ast}, z\rangle \label{eq:1019}\\
&h(x^{\ast}+z)-h(x^{\ast})\ge  \langle w^{\ast}, z\rangle\label{al:1026}
\end{align}
for any $z$. 
Since 
\eqref{al:1026} can be shown similarly to \eqref{eq:1019}, we show only \eqref{eq:1019}.
Choose $z\in \real^{n}$ arbitrarily. If $x^{\ast}+z\notin \dom\tilde{g}^{\rm cl}$, 
\eqref{eq:1019} is true since $\tilde{g}^{\rm cl}(x^{\ast}+z)=\infty$ 
while $\tilde{g}^{\rm cl}(x^{\ast})<\infty$.
Hence, we consider the case where $x^{\ast}+z\in \dom\tilde{g}^{\rm cl}$ and let $z^{\ell}:=x^{\ast}+z-x^{\ell}$ for $\ell$. Then, $x^{\ell}+z^{\ell}=x^{\ast}+z\in \dom\tilde{g}^{\rm cl}$ for any $\ell$.
Since $\tilde{G}^{\rm cl}_{\mu^{\ell}_1}$ is convex and 
$v^{\ell}\in \partial \tilde{G}^{\rm cl}_{\mu^{\ell}_1}(x^{\ell})$, we have 
\begin{equation}
\tilde{G}^{\rm cl}_{\mu^{\ell}_1}(x^{\ell}+z^{\ell})-\tilde{G}^{\rm cl}_{\mu^{\ell}_1}(x^{\ell})\ge \langle v^{\ell},z^{\ell}\rangle\label{eq:1007}
\end{equation}
for any $\ell$. Letting $\ell$ tend to $\infty$ in \eqref{eq:1007} and using Lemma\,\ref{lem:1012} imply \eqref{eq:1019}.
The proof is complete.
\end{proof}
Now, with help of the above lemmas we next prove a proposition concerning well-definedess of the proposed algorithm.
\begin{proposition}\label{prop:5-1}
Under Assumptions 
$(A1)$ and $(A2)$, the smoothing SCMIP is well-defined in the sense that $\partial H_{\mu_2^k}(x^k)$ is nonempty and
\begin{equation}
{\rm min}\ G_{\mu^k_1}(x)-\langle y ^k,x-x^k\rangle \ \mbox{{\rm sub.~to} }x\in\mathcal{F}\label{subp1}
\end{equation}
has at least one optimum for any $k\ge 0$.
Furthermore, under Assumption~$(A3)$, the optimal value is finite.
\end{proposition}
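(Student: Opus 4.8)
The plan is to verify the three assertions in turn, with the conceptual crux being a coercivity estimate that forces the subproblem \eqref{subp1} to attain its minimum.

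First I would dispose of the nonemptiness of $\partial H_{\mu_2^k}(x^k)$. By $(A0)$ we have $\dom h=\real^n$, and property $(1)$ of Definition~\ref{def:smoothing} gives $\dom H_{\mu_2^k}=\dom h=\real^n$. Hence $H_{\mu_2^k}$ is a finite convex function on all of $\real^n$ (convexity from $(P1)$), so it is subdifferentiable at every point; when $\mu_2^k>0$ it is moreover continuously differentiable by property $(2)$, so $\partial H_{\mu_2^k}(x^k)=\{\nabla H_{\mu_2^k}(x^k)\}$. In either case the selection of $y^k$ in Step~1 is legitimate, so the algorithm is well posed at this stage.

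Next comes the main step: bounding the objective $\theta_k(x):=G_{\mu_1^k}(x)-\langle y^k,x-x^k\rangle$ of \eqref{subp1} from below by $f$ plus a quadratic. Using $y^k\in\partial H_{\mu_2^k}(x^k)$ together with the strong-convexity modulus $\tau$ from $(A2)$, I obtain $\langle y^k,x-x^k\rangle\le H_{\mu_2^k}(x)-H_{\mu_2^k}(x^k)-\tfrac{\tau}{2}\|x-x^k\|^2$. Substituting this and invoking the sandwich bounds $g\le G_{\mu_1^k}$ and $H_{\mu_2^k}\le h+\kappa\mu_2^k$ from Lemma~\ref{lem:1222} yields, for every $x\in\mathcal{F}$, the estimate $\theta_k(x)\ge f(x)-\kappa\mu_2^k+H_{\mu_2^k}(x^k)+\tfrac{\tau}{2}\|x-x^k\|^2$. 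I would then show $\underline{f}:=\inf_{x\in\mathcal{F}}f(x)>-\infty$ using only $(A1)$, exactly as in the first half of the proof of Proposition~\ref{kesukamo}: any sequence in $\mathcal{F}$ driving $f$ to $-\infty$ eventually lies in the compact set $\Omega_0$, and lower semicontinuity of $f$ contradicts properness. Consequently $\theta_k(x)\ge\big(\underline{f}-\kappa\mu_2^k+H_{\mu_2^k}(x^k)\big)+\tfrac{\tau}{2}\|x-x^k\|^2$, so $\theta_k$ is bounded below and coercive on the closed set $\mathcal{F}=S\cap(\real^M\times\integer^N)$. A minimizing sequence is therefore bounded; passing to a convergent subsequence whose limit lies in $\mathcal{F}$ and using lower semicontinuity of $\theta_k$ gives a minimizer, so \eqref{subp1} has an optimum. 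For the finiteness claim, the same estimate already shows the optimal value is $\ge\underline{f}-\kappa\mu_2^k+H_{\mu_2^k}(x^k)>-\infty$, while under $(A3)$ there is $\bar{x}\in\dom f\cap\mathcal{F}=\dom g\cap\mathcal{F}$, and since $\dom G_{\mu_1^k}=\dom g$ we have $\theta_k(\bar{x})<\infty$; hence the optimal value is finite.

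The delicate point I expect to be the main obstacle is the coercivity used for existence. Neither $\tilde{G}^{\rm cl}_{\mu_1^k}$ nor $G_{\mu_1^k}$ need be coercive on its own (indeed the convex closure typically fails to be strongly convex, as emphasized after \eqref{eq:1024_1}), so the indispensable ingredient is precisely the quadratic term $\tfrac{\tau}{2}\|x-x^k\|^2$ manufactured from the strong convexity of $H$ in $(A2)$; this is why $(A2)$ cannot be dropped, even though no strong convexity of $G$ is available. A secondary technical care is the lower semicontinuity and closedness required to run the Weierstrass argument over the \emph{nonconvex} mixed-integer feasible region $\mathcal{F}$, rather than over the convex relaxation \eqref{bright3}.
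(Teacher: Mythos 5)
Your proof is correct and follows essentially the same route as the paper's: both decompose the subproblem objective via $f_{u^k}=G_{\mu_1^k}-H_{\mu_2^k}$, bound $f_{u^k}$ from below on $\mathcal{F}$ using Lemma~\ref{lem:1222} together with the lower boundedness of $f$ on $\mathcal{F}$ coming from $(A1)$, obtain coercivity from the strong convexity of $H_{\mu_2^k}$ in $(A2)$, and get finiteness under $(A3)$ from $\dom G_{\mu_1^k}=\dom g=\dom f$. Your only (welcome) refinements are cosmetic: you make the coercivity quantitative through the strong-convexity subgradient inequality $H_{\mu_2^k}(x)\ge H_{\mu_2^k}(x^k)+\langle y^k,x-x^k\rangle+\tfrac{\tau}{2}\|x-x^k\|^2$, and you correctly note that only the first half of the argument in Proposition~\ref{kesukamo} (which uses just $(A1)$) is needed for the existence claim, whereas the paper cites that proposition wholesale.
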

\begin{proof}
The nonemptiness of $\partial H_{\mu_2^k}(x^k)$ readily follows from $\dom\,H_{\mu_2^k}=\dom\,h=\real^n$ following from $(A0)$.
We will show the latter statement.
Let 
$\phi_{k}(x):=
G_{\mu^k_1}(x)-\langle y ^k,x-x^k\rangle.
$
For any $\{z^l\}\subseteq \mathcal{F}$ such that $\|z^l\|\to\infty$, it holds that 
\begin{equation}
\phi_{k}(z^l)=
f_{u^k}(z^l)+H_{\mu^k_2}(z^l)
-\langle y ^k,z^l-x^k\rangle\to \infty\ (l\to\infty),\notag 
\end{equation}
since $H_{\mu^k_2}$ is strongly convex by Assumption\,$(A2)$
and $\{f_{u^k}(z^l)\}$ can be shown to be bounded from below by using Proposition\,\ref{kesukamo} and Lemma\,\ref{lem:1222}.
Hence, $\phi_k$ is coercive and convex in $\mathcal{F}$,
from which we can conclude that
\eqref{subp1} has an optimum.
It remains to show that the optimal value is finite. Notice that Lemma\,\ref{lem:1222} yields $\dom~g=\dom~G_{\mu_1}$ and thus $\dom f=\dom g=\dom~G_{\mu_1}$ holds. Then, from Assumption~$(A3)$, there exists some feasible point $x\in \mathcal{F}$ such that $G_{\mu_1}(x)<\infty$.
Therefore, we have $\min_{x\in\mathcal{F}}G_{\mu_1}(x)-\langle y ^k,x-x^k\rangle<\infty$.
\end{proof}
The next proposition concerns the decrement of the objective value per iteration.
\begin{proposition}\label{prop:1013}
Suppose that Assumption\,$(A1)$ holds.
Then, for any $k\ge 1$, it holds that  
\begin{equation}
f_{u^{k+1}}(x^{k+1})\le f_{u^k}(x^{k})-\frac{\tau}{2}\|{x^{k+1}-x^k}\|^2
+\kappa(\mu_{2}^k-\mu_{2}^{k+1}),\label{eq:1226-1}
\end{equation}
Moreover, the same can be said for the function $\tilde{f}^{\rm cl}$:
\begin{equation}
\tilde{f}^{\rm cl}_{u^{k+1}}(x^{k+1})
\le \tilde{f}^{\rm cl}_{u^k}(x^{k})-\frac{\tau}{2}\|{x^{k+1}-x^k}\|^2+\kappa(\mu_{2}^k-\mu_{2}^{k+1}). \label{eq:1226-2}
\end{equation}
\end{proposition}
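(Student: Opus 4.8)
The plan is to derive both inequalities by the same three-step mechanism: prove a one-step descent at the \emph{fixed} smoothing parameter $u^k$, and then account separately for the parameter shift from $u^k$ to $u^{k+1}=\gamma u^k$. The cleanest route is to treat \eqref{eq:1226-1} in full and then obtain \eqref{eq:1226-2} by the verbatim argument with $G_{\mu_1^k}$ replaced by $\tilde{G}^{\rm cl}_{\mu_1^k}$. Before anything else I would record the structural fact that for $k\ge 1$ the iterate $x^k$ is feasible, i.e. $x^k\in\mathcal{F}$, since it was produced as an optimum of the subproblem \eqref{bright2} (equivalently \eqref{subp1}) constrained to $\mathcal{F}$; this is exactly why the statement begins at $k\ge 1$, as the initial $x^0$ need not lie in $\mathcal{F}$. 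Feasibility of $x^k$ is what lets me compare the subproblem objective at its optimum $x^{k+1}$ with its value at $x^k$.

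\textbf{Fixed-parameter descent.} Because $x^{k+1}$ minimizes $G_{\mu_1^k}(\cdot)-\langle y^k,\cdot-x^k\rangle$ over $\mathcal{F}$ and $x^k\in\mathcal{F}$, substituting $x^k$ gives
\[
G_{\mu_1^k}(x^{k+1})-\langle y^k,x^{k+1}-x^k\rangle\le G_{\mu_1^k}(x^k).
\]
Next, since $y^k\in\partial H_{\mu_2^k}(x^k)$ and $H_{\mu_2^k}$ is $\tau$-strongly convex by $(A2)$, the strong-convexity subgradient inequality evaluated at $z=x^{k+1}$ yields
\[
\langle y^k,x^{k+1}-x^k\rangle\le H_{\mu_2^k}(x^{k+1})-H_{\mu_2^k}(x^k)-\frac{\tau}{2}\|x^{k+1}-x^k\|^2.
\]
Combining the two displays and regrouping the $G$- and $H$-terms produces the same-parameter descent $f_{u^k}(x^{k+1})\le f_{u^k}(x^k)-\frac{\tau}{2}\|x^{k+1}-x^k\|^2$.

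\textbf{Parameter shift.} To pass from $f_{u^k}(x^{k+1})$ to $f_{u^{k+1}}(x^{k+1})$ I invoke $(P2)$ with $u^{k+1}=\gamma u^k$, $\gamma\in(0,1)$, so that $\mu_1^{k+1}\le\mu_1^k$ and $\mu_2^{k+1}\le\mu_2^k$. Monotonicity of $G_\mu$ in $\mu$ gives $G_{\mu_1^{k+1}}(x^{k+1})\le G_{\mu_1^k}(x^{k+1})$, so the $\mu_1$-shift is \emph{free} and contributes no extra term; the Lipschitz-in-$\mu$ bound of $(P2)$ applied to $H$ gives $-H_{\mu_2^{k+1}}(x^{k+1})\le -H_{\mu_2^k}(x^{k+1})+\kappa(\mu_2^k-\mu_2^{k+1})$. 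Adding these yields $f_{u^{k+1}}(x^{k+1})\le f_{u^k}(x^{k+1})+\kappa(\mu_2^k-\mu_2^{k+1})$, and chaining with the fixed-parameter descent gives \eqref{eq:1226-1}. For \eqref{eq:1226-2} the argument is identical: $x^{k+1}$ also solves the equivalent convex program \eqref{bright3}, i.e. minimizes $\tilde{G}^{\rm cl}_{\mu_1^k}(\cdot)-\langle y^k,\cdot-x^k\rangle$ over $\real^n$ (with $x^k$ an admissible comparison point), and the required $\mu_1$-shift $\tilde{G}^{\rm cl}_{\mu_1^{k+1}}\le\tilde{G}^{\rm cl}_{\mu_1^k}$ follows from the pointwise monotonicity of $G_\mu$ in $\mu$, preserved under restriction to $\real^M\times\integer^N$ and under the (monotone) convex-closure operation.

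The computations are routine DCA bookkeeping, so the proof has no deep obstacle; the one place that genuinely demands care is the parameter-shift step, where I must verify the direction of each $\mu$-monotonicity so that the favorable $G$- and $\tilde{G}^{\rm cl}$-shifts cost nothing while only the $H$-shift contributes $\kappa(\mu_2^k-\mu_2^{k+1})$. In particular I would deliberately avoid invoking the two-sided bound of Lemma~\ref{lem:1021-2}(i) for the closure, since it is phrased against $\tilde{g}^{\rm cl}$ and would inject a spurious $\kappa\mu_1$ term absent from the target inequality; the direct monotonicity of the closure operator is what keeps the $\mu_1$-adjustment costless. The only other point worth stating explicitly is the feasibility (hence domain-membership) of $x^k$ for $k\ge 1$, which underlies both optimality comparisons and accounts for the hypothesis $k\ge 1$.
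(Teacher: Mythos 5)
Your proof is correct, and for the first inequality \eqref{eq:1226-1} it coincides with the paper's argument: optimality of $x^{k+1}$ in \eqref{bright2} against the feasible comparison point $x^k$ (feasibility holding for $k\ge 1$, exactly as you note), the strong-convexity subgradient inequality for $H_{\mu_2^k}$ from $(A2)$, summation of the two displays, and then the $(P2)$ parameter shift in which the $G$-shift is free and only the $H$-shift costs $\kappa(\mu_2^k-\mu_2^{k+1})$. (Like the paper, you invoke $(A2)$ although the statement hypothesizes only $(A1)$; this inconsistency is the paper's, not yours.) Where you genuinely diverge is \eqref{eq:1226-2}. The paper does not rerun the descent for $\tilde{G}^{\rm cl}$: it observes that for $k\ge 1$ the iterates lie in $\mathcal{F}$, where $\delta_S$ vanishes and the convex closure of a function admitting a convex extension agrees with that function, so $\tilde{G}^{\rm cl}_{\mu_1}(x^j)=G_{\mu_1}(x^j)$ along the sequence; hence $\tilde{f}^{\rm cl}_{u}$ and $f_{u}$ coincide at the iterates and \eqref{eq:1226-2} follows from \eqref{eq:1226-1} by pure substitution. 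You instead redo the whole argument in terms of $\tilde{G}^{\rm cl}_{\mu_1^k}$, which needs two extra ingredients: that $x^{k+1}$ is optimal for the continuous program \eqref{bright3} (true, but this rests on Theorem~\ref{thm:1023}, which guarantees the optimal set of \eqref{bright2} is contained in that of \eqref{bright3}), and that the convex-closure operation preserves the pointwise ordering $G_{\mu_1^{k+1}}\le G_{\mu_1^k}$ (also true, by the epigraph-inclusion argument the paper itself employs in the proof of Lemma~\ref{lem:1021-2}(i)). Both routes are sound: the paper's is shorter because the identification at feasible points makes the second inequality free, while yours is more self-contained in that it never uses the coincidence of $\tilde{G}^{\rm cl}_{\mu}$ with $G_{\mu}$ on $\mathcal{F}$; and your caution about not invoking the two-sided bound of Lemma~\ref{lem:1021-2}(i) — which compares against $\tilde{g}^{\rm cl}$ and would inject a spurious $\kappa\mu_1$ term — is well placed.
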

\begin{proof}
For simplicity of expression, we consider only the case 
where $\{u^k\}\subseteq \real^2_{++}$. Hence, $\partial H_{\mu_2^k}(x^k)=\nabla H_2(x^k)$ for all $k\ge 0$.
The argument below can be directly extended to the case where $\mu^k_1=0$ or $\mu^k_2=0$ for any $k\ge 0$.

Since $x^{k+1}$ is an optimum of \eqref{bright2} for $k\ge 0$ and $x^k$ is feasible to \eqref{bright2} for $k\ge 1$, 
we have
\begin{equation}
G_{\mu^k_1}(x^{k+1})-\langle y^k, x^{k+1}-x^k\rangle
\le G_{\mu^k_1}(x^{k})-\langle y^k, x^{k}-x^k\rangle= G_{\mu^k_1}(x^k).\label{eq:1222-2259}
\end{equation}
On the other hand, we derive 
$H_{\mu^k_2}(x^{k+1})-H_{\mu^k_2}(x^k)\ge \langle y^k, x^{k+1}-x^k\rangle + \tau\|x^{k+1}-x^k\|^2/2$
from 
$y^k\in\nabla H_{\mu_2^k}(x^{k})$ and 
the strong convexity of $H_{\mu^k_2}$ with a modulus $\tau>0$ (Assumption $(A2)$),
and thus we get 
\begin{equation}
-H_{\mu^k_2}(x^{k+1})+ \langle y^k, x^{k+1}-x^k\rangle
\le -H_{\mu^k_2}(x^k)-\frac{\tau}{2}{\|x^{k+1}-x^k\|^2}.\label{eq:1222-2300}
\end{equation}
Summing up the both sides of \eqref{eq:1222-2259} and \eqref{eq:1222-2300} yields 
$$
G_{\mu_1^k}(x^{k+1})-H_{\mu^k_2}(x^{k+1})
\le G_{\mu_1^{k}}(x^k)-H_{\mu^k_2}(x^k)-\frac{\tau}{2}\|x^{k+1}-x^k\|^2.
$$
Combining this in turn with 
$G_{\mu^{k+1}_1}(x^{k+1})\le G_{\mu^k_1}(x^{k+1})$ 
and $H_{\mu^k_2}(x^{k+1})\le H_{\mu^{k+1}_2}(x^{k+1})+\kappa(\mu_{2}^k-\mu_{2}^{k+1})$ following from $(P2)$ yields \eqref{eq:1226-1}.
By noting that  
$
\tilde{G}^{\rm cl}_{\mu^k_1}(x^{k})=G_{\mu^k_1}(x^k)
$
for any $k\ge 1$ since $x^k$ is feasible to \eqref{prob1},
we further derive from \eqref{eq:1226-1} that
$$
\tilde{G}^{\rm cl}_{\mu^k_1}(x^{k+1})-H_{\mu^k_2}(x^{k+1})\le
\tilde{G}^{\rm cl}_{\mu^k_1}(x^{k})-H_{\mu^k_2}(x^k)-\frac{\tau}{2}\|x^{k+1}-x^k\|^2+\kappa(\mu_{2}^k-\mu_{2}^{k+1}).
$$
Therefore, from $\tilde{f}^{\rm cl}_{u}=\tilde{G}^{\rm cl}_{\mu_1}-H_{\mu_2}$,
we conclude \eqref{eq:1226-2}.
\end{proof}
We are now ready to give the final convergence property of the smoothing SCMIP.
\begin{theorem}\label{th:1013}
Suppose that Assumptions\,$(A1)$--$(A3)$ hold. Then,
\begin{enumerate}
\item[$({\rm i})$] the generated sequences $\{x^k\}$ and $\{y^k\}$ are bounded, and 
\item[$({\rm ii})$]$\lim_{k\to\infty}\|x^{k+1}-x^k\|=0$.
\end{enumerate}
Let arbitrary accumulation points of $\{x^k\}$ and $\{y^k\}$ be $x^{\ast}:=(x^{\ast}_M,x^{\ast}_N)\in \real^M\times \integer^N$ and $y^{\ast}\in \mathbb{R}^n$, respectively. Then, we have
\begin{enumerate}
\item[$({\rm iii})$] $x^{\ast}$ is feasible to \eqref{prob1}, 
\item[$({\rm iv})$] $\{x^k_N\}$ converges to $x^{\ast}_N$ in finitely many iterations, and 
\item[$({\rm v})$] $x^{\ast}$ is a stationary point of DC program\,\eqref{prob3}.
In addition, $y^{\ast}$ is a stationary point of the 
Toland-Singer
dual of \eqref{prob3}.
\end{enumerate}
\end{theorem}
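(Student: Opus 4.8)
The plan is to run the standard DCA-style convergence argument adapted to the smoothing parameters: use the per-iteration descent inequalities of Proposition~\ref{prop:1013} to trap the iterates and to extract a summable-step estimate, and then feed a convergent subsequence into the subgradient-limit machinery of Lemmas~\ref{lem:1012} and \ref{prop:1014} together with the optimality condition of the subproblem \eqref{bright3}.

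For $({\rm i})$ I would first show every iterate lies in $\Omega_0$. Dropping the nonpositive term in \eqref{eq:1226-1} gives $f_{u^{k+1}}(x^{k+1})\le f_{u^k}(x^k)+\kappa(\mu^k_2-\mu^{k+1}_2)$, and since $\{\mu^k_2\}$ is decreasing this telescopes to $f_{u^k}(x^k)\le f_{u^1}(x^1)+\kappa(\mu^1_2-\mu^k_2)$ for $k\ge 1$. Combining this with the two-sided bounds $f(x^k)\le f_{u^k}(x^k)+\kappa\mu^k_2$ and $f_{u^1}(x^1)\le f(x^1)+\kappa\mu^1_1$ from \eqref{eq:1222-2} of Lemma~\ref{lem:1222} makes the $\mu_2$-terms cancel and yields $f(x^k)\le f(x^1)+\kappa(\mu^1_1+\mu^1_2)$. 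As each iterate is feasible, $\{x^k\}\subseteq\Omega_0$, which is compact by $(A1)$, so $\{x^k\}$ is bounded. Boundedness of $\{y^k\}$ then follows because, by \eqref{eq:1222}, the convex functions $H_{\mu^k_2}$ are uniformly bounded on a bounded neighborhood of $\{x^k\}$ (using that $\{\mu^k_2\}$ is bounded and, by $(A0)$, $h$ is finite-valued hence continuous), so they are uniformly Lipschitz there and their subgradients $y^k\in\partial H_{\mu^k_2}(x^k)$ are uniformly bounded.

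For $({\rm ii})$ I would instead retain the quadratic term and sum \eqref{eq:1226-1} from $1$ to $K$; the right-hand side telescopes to $f_{u^1}(x^1)-f_{u^{K+1}}(x^{K+1})+\kappa(\mu^1_2-\mu^{K+1}_2)$, which is bounded above because $f_{u^{K+1}}(x^{K+1})$ is bounded below (finiteness of the optimal value from Proposition~\ref{kesukamo}, again via \eqref{eq:1222-2}). Hence $\sum_k\|x^{k+1}-x^k\|^2<\infty$ and $\|x^{k+1}-x^k\|\to 0$. Part $({\rm iii})$ is then immediate, since $\mathcal{F}$ is closed and each $x^k\in\mathcal{F}$, so $x^{\ast}\in\mathcal{F}$. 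Part $({\rm iv})$ is the characteristic finite-termination step: because $x^k_N,x^{k+1}_N\in\integer^N$, the quantity $\|x^{k+1}_N-x^k_N\|$ is either $0$ or at least $1$, so $({\rm ii})$ forces $x^{k+1}_N=x^k_N$ for all large $k$, i.e. $\{x^k_N\}$ is eventually constant and equal to $x^{\ast}_N$.

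The crux is $({\rm v})$. The key observation is that $x^{k+1}$, being optimal for the mixed-integer subproblem \eqref{bright2}, is by Theorem~\ref{thm:1023} also a minimizer of the equivalent continuous problem \eqref{bright3}, whose optimality condition is exactly $y^k\in\partial\tilde{G}^{\rm cl}_{\mu^k_1}(x^{k+1})$; the algorithm meanwhile sets $y^k\in\partial H_{\mu^k_2}(x^k)$. Passing to a subsequence along which $x^k\to x^{\ast}$ and, by $({\rm i})$, $y^k\to y^{\ast}$, we also have $x^{k+1}\to x^{\ast}$ by $({\rm ii})$. Before invoking Lemma~\ref{prop:1014} I must verify $x^{\ast}\in\dom\tilde{g}^{\rm cl}$: for feasible $x^{k+1}$ one has $\tilde{G}^{\rm cl}_{\mu^k_1}(x^{k+1})=G_{\mu^k_1}(x^{k+1})\le g(x^{k+1})+\kappa\mu^k_1$, and $g(x^{k+1})=f(x^{k+1})+h(x^{k+1})$ is bounded on $\Omega_0$, so $\tilde{g}^{\rm cl}(x^{k+1})$ is bounded and lower semicontinuity of $\tilde{g}^{\rm cl}$ gives $\tilde{g}^{\rm cl}(x^{\ast})<\infty$. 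Then Lemma~\ref{prop:1014}, applied with $v^k=y^k\in\partial\tilde{G}^{\rm cl}_{\mu^k_1}(x^{k+1})$ and $w^k=y^k\in\partial H_{\mu^k_2}(x^k)$, delivers simultaneously $y^{\ast}\in\partial\tilde{g}^{\rm cl}(x^{\ast})$ and $y^{\ast}\in\partial h(x^{\ast})$, so $\partial\tilde{g}^{\rm cl}(x^{\ast})\cap\partial h(x^{\ast})\ni y^{\ast}$ and $x^{\ast}$ is stationary for \eqref{prob3} in the sense of Definition~\ref{def-stationary}. Finally, the conjugate reciprocity of the opening Proposition (part~2) turns these into $x^{\ast}\in\partial(\tilde{g}^{\rm cl})^{\ast}(y^{\ast})\cap\partial h^{\ast}(y^{\ast})$, exhibiting $y^{\ast}$ as a stationary point of the Toland-Singer dual. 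The main obstacle I anticipate is the careful bookkeeping of the index shift (the $\tilde{G}^{\rm cl}$-subgradient sits at $x^{k+1}$ while the $H$-subgradient sits at $x^k$, both equal to the same $y^k$) together with the verification $x^{\ast}\in\dom\tilde{g}^{\rm cl}$ that licenses Lemma~\ref{prop:1014}.
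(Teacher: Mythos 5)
Your proposal is correct and follows the same architecture as the paper's proof: the descent inequality of Proposition~\ref{prop:1013} combined with the sandwich bounds \eqref{eq:1222-2} of Lemma~\ref{lem:1222} traps the iterates in the compact set $\Omega_0$ of $(A1)$; the quadratic term forces $\|x^{k+1}-x^k\|\to 0$; integrality of $x^k_N$ then gives finite convergence of the integer part; and finally Lemma~\ref{prop:1014} is applied to the two subgradient relations $y^k\in\partial H_{\mu_2^k}(x^k)$ and $y^k\in\partial\tilde{G}^{\rm cl}_{\mu_1^k}(x^{k+1})$ coming from the optimality condition of \eqref{bright3} (your explicit handling of the index shift, using that both $x^k$ and $x^{k+1}$ converge to $x^{\ast}$, is exactly what the paper does tacitly via \eqref{al:1013}). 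Three local steps differ in tactics, all legitimately. First, for boundedness of $\{y^k\}$ the paper argues by contradiction: it normalizes $y^k/\|y^k\|$, extracts a limit direction $\bar{y}$ with $\|\bar{y}\|=1$, and divides the subgradient inequality $H_{\mu_2^k}(x^k+\bar{y})-H_{\mu_2^k}(x^k)\ge\langle\bar{y},y^k\rangle$ by $\|y^k\|$ to force $\|\bar{y}\|^2\le 0$; you instead invoke the uniform local Lipschitz property of the convex functions $H_{\mu_2^k}$, which by \eqref{eq:1222}, $\mu_2^k\le\mu_2^1$, and continuity of the finite-valued $h$ (Assumption~$(A0)$) are uniformly bounded on a neighborhood of the compact set containing $\{x^k\}$. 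Your route is cleaner and yields an explicit bound, at the price of quoting the standard boundedness-implies-Lipschitz fact for convex functions. Second, for $({\rm ii})$ you sum \eqref{eq:1226-1} and obtain $\sum_k\|x^{k+1}-x^k\|^2<\infty$, slightly stronger than the paper's observation that $\{f_{u^k}(x^k)+\kappa\mu_2^k\}$ is bounded and monotone nonincreasing, hence convergent. Third, for the verification $x^{\ast}\in\dom\tilde{g}^{\rm cl}$ in $({\rm v})$ you bound $\tilde{g}^{\rm cl}(x^{k+1})$ along the sequence and use lower semicontinuity of the closed convex function $\tilde{g}^{\rm cl}$, whereas the paper derives a contradiction between $f(x^{\ast})=\infty$ and $f(x^{\ast})\le\lim_{k\to\infty}\bigl(f_{u^k}(x^k)+\kappa\mu_2^k\bigr)<\infty$. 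All three variants are sound and fully interchangeable with the paper's.
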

\begin{proof}
\begin{enumerate}
\item[(i)] Let $X_1:=\{(x,u)\in \mathcal{F}\times \real^2 \mid 
f_{u}(x)+\kappa \mu_2\le f_{u^1}(x^1)+\kappa \mu_2^1,\ 0\le u\le u^1\}$ and $X_2:=\{(x,u)\in \mathcal{F}\times \real^2\mid f(x)\le f(x^1)+
\kappa(\mu^1_1+\mu^1_2),\ 0\le u\le u^1\}$.
Notice that $X_2$ is bounded from Assumption~$(A1)$.  
Choose $(x,u)\in X_1$ arbitrarily.
Then, from Lemma\,\ref{lem:1222}, we have 
\begin{align}
f(x)\le f_{u}(x)+\kappa\mu_2\le  f_{u^1}(x^1)+\kappa \mu_2^1\le f(x^1)+\kappa(\mu^1_1+\mu^1_2)\notag 
\end{align}
where
the first inequality follows from Lemma\,\ref{lem:1222}\,\eqref{eq:1222-2}, 
the second one from $(x,u)\in X_1$, and the third by using \eqref{eq:1222-2} again with $x=x^1$ and $u=u^1$.
This together with $0\le u \le u^1$ implies $(x,u)\in X_2$.
Therefore, we have $X_1\subseteq X_2$, and thus $X_1$ is bounded.  
Since $\{(x^k,u^k)\}_{k\ge 1}\subseteq X_1$ from Proposition\,\ref{prop:1013}, we then conclude that $\{x^k\}$ is bounded.
To prove the boundedness of $\{y^k\}$, suppose that $\{y^k\}$ is unbounded for contradiction.
Let $(\bar{x},\bar{y})$ be an arbitrary accumulation point of $\left\{\left(x^k,y^k/\|y^k\|\right)\right\}$.
We may assume that $\left(x^k,y^k/\|y^k\|\right)\to (\bar{x},\bar{y})$ as $k\to\infty$ without loss of generality.
From $y^k\in \partial H_{\mu^k_2}(x^k)$, we have 
\begin{equation}
H_{\mu_2^k}(x^k+\bar{y})-H_{\mu_2^k}(x^k)\ge \langle \bar{y},y^k\rangle.\label{eq:1016-1}
\end{equation}
Then, by Definition\,\ref{def:smoothing}(3) with $\Phi=H$, $\lim_{k\to \infty}H_{\mu_2^k}(x^k+\bar{y})=h(\bar{x}+\bar{y})<\infty$ and $\lim_{k\to \infty}H_{\mu_2^k}(x^k)=h(\bar{x})<\infty$, which together with dividing the both sides of \eqref{eq:1016-1} by $\|y^k\|$ and driving $k$ to $\infty$ entail 
$0\ge \|\bar{y}\|^2$,
which contradicts $\|\bar{y}\|=1$. We thus see that $\{y^k\}$ is bounded. 
\item[(ii)]
By Proposition\,\ref{prop:1013} and Lemma~\ref{lem:1222} with $x=x^{k+1}$ and $u=u^{k+1}$, it holds that 
\begin{equation}
f(x^{k+1})\le
f_{u^{k+1}}(x^{k+1})+\kappa\mu^{k+1}_2\le f_{u^k}(x^{k})+\kappa\mu^k_2 -\frac{\tau}{2}\|x^{k+1}-x^k\|^2<\infty\label{eq:1016-2}
\end{equation}
for any $k\ge 1$, where the last inequality holds since
$
f_{u^k}(x^k)\le f(x^k)+\kappa\mu_1^k
$ follows from Lemma~\ref{lem:1222} and
$
x^{k}\in \dom G_{\mu_1^{k-1}}=\dom g=\dom f\ (k\ge 1)
$ from Proposition~\ref{prop:5-1} under Assumption~$(A3)$.
Then, by noting that $\{f(x^k)\}$ is bounded from below according to Proposition~\ref{kesukamo} and the fact that $x^k\in\mathcal{F}$ for $k\ge 1$,
\eqref{eq:1016-2} implies that $\left\{f_{u^k}(x^{k})+\kappa\mu^k_2\right\}$ is a bounded monotone nonincreasing sequence,
and hence a convergent sequence.
Therefore, $\lim_{k\to\infty}\|x^{k+1}-x^k\|=0$ follows from the second and third inequalities of \eqref{eq:1016-2}.
\item[(iii)] 
By noting $\{x^k\}_{k\ge 1}\subseteq \mathcal{F}=S\cap \left(\real^M\times\integer^N\right)$ and the closedness of $\mathcal{F}$, 
$x^{\ast}\in \mathcal{F}$ is easily derived.
\item[(iv)] 
For contradiction, suppose that 
$\{x^k_N\}$ does not converge in finitely many iterations.
Then,  there exist infinitely many $k$ such that  
$\|x^{k+1}_N-x^k_N\|\ge 1$, since $x^k_N\in \integer^N$ for any $k\ge 1$. 
However, it contradicts the fact of $\lim_{k\to \infty}\|x^k-x^{k+1}\|=0$. 
\item[(v)]
We first prove that $x^{\ast}\in \dom \tilde{g}^{\rm cl}$.
Supposing to the contrary, we have $\tilde{g}^{\rm cl}(x^{\ast})=\infty$.  
Then, from the definition of $\tilde{g}^{\rm cl}$ and feasibility of $x^{\ast}$, 
we obtain $g(x^{\ast})=\tilde{g}^{\rm cl}(x^{\ast})=\infty$, and hence $f(x^{\ast})=\infty$.
On the other hand, 
\eqref{eq:1016-2} and \eqref{eq:1222-2} yield that $f(x^{\ast})=\lim_{k\to \infty}f_{u^k}(x^k)+\kappa\mu_2^{k}<\infty$.
This is a contradiction.

We next show the main claim.
As
$
{\rm ri}\,\dom\tilde{G}_{\mu^k_1}^{\rm cl}\neq \emptyset$ from Lemma\,\ref{lem:1021-2}, we obtain 
\begin{equation}
\partial\left(\tilde{G}^{\rm cl}_{\mu^k_1}(x)-\langle y^k,x-x^k\rangle\right)=
\partial\tilde{G}^{\rm cl}_{\mu^k_1}(x)- y^k.
\notag 
\end{equation}
By using  this equality and 
the fact that $x^{k+1}$ is an optimum of the convex program\,\eqref{bright3}
\begin{align}
&\partial H_{\mu^k_2}(x^k)\ni y^k,\
\partial\tilde{G}_{\mu^k_1}^{\rm cl}(x^{k+1})- y^k\ni 0.\ \label{al:1013}
\end{align} 
Without loss of generality (if necessary by taking a subsequence), 
we can assume that $(x^k,y^k)$ converges to $(x^{\ast},y^{\ast})$ as $k$ tends to $\infty$. 
Since $\lim_{k\to \infty}x^{k+1}=x^{\ast}$ is derived from (ii) and $x^{\ast}\in \dom\tilde{g}^{\rm cl}$ holds from the first-half argument,
Lemma\,\ref{prop:1014} and \eqref{al:1013} yield that 
$\partial\tilde{g}^{\rm cl}(x^{\ast})\ni y^{\ast}$ and $y^{\ast}\in\partial h(x^{\ast})$,
which means $y^{\ast}\in \partial\tilde{g}^{\rm cl}(x^{\ast})\bigcap \partial h(x^{\ast})$.
Therefore, $x^{\ast}\in \partial(\tilde{g}^{\rm cl})^{\ast}(y^{\ast})\bigcap \partial h^{\ast}(y^{\ast})$.
This completes the proof.
\end{enumerate}
\end{proof}
\section{Concluding remarks}\label{sec6}
In this paper, we have considered mixed integer programs having DC objective functions and closed convex constraints.  
For these problems, we have extended the result of Maehara, Marumo, and Murota 
concerning continuous relaxations of discrete DC programs
and obtained a continuous 
DC program whose optimal value is exactly equal to the original one.
We have also proposed two algorithms to solve the obtained relaxed problem. 
The first is a fundamental algorithm based on the DCA, 
which is a well-known  algorithm for continuous DC programs. 
In the second, we incorporate a smoothing method into the first,
so that we can handle nonsmooth functions efficiently.
For both methods, we proved that the generated sequence 
converges to a stationary point under some mild assumptions.

Our contribution can be summarized as follows:
\begin{itemize}
\item 
We have proposed a new framework for solving mixed integer DC programs. These are a wide class of problems containing many mixed integer nonlinear programs
which are notorious as being extremely difficult.  
Although our method involves the 
computationally costly routine of repeatedly solving convex MIPs,  
it still has significant merit, since it provides a practical way of dealing
with these tough problems.
\item 
We have theoretically proved convergence of generated sequences,
thus the solutions provided by our algorithms are stationary points
which have good chances of being the global optimum.
\end{itemize}
We conclude this paper by mentioning that while our method
does not obtain polynomial complexity, there may be some specific problems
for which it is tractable.
For example, Maehara, Marumo and Murota \cite{Maehara2015b} showed that 
their DCA-based algorithm can efficiently solve 
the degree-concentrated spanning tree problem.
The search for such problems is a possible direction for future work.

\end{document}